%
%
%


\documentclass{amsart}
\usepackage{amssymb}

\usepackage{graphicx}



\newtheorem{theorem}{Theorem}[section]
\newtheorem{lemma}[theorem]{Lemma}

\newtheorem{claim}[theorem]{Claim}
\newtheorem{question}[theorem]{Question}

\theoremstyle{definition}

\theoremstyle{remark}

\numberwithin{equation}{section}

\usepackage{caption} 
\captionsetup[table]{skip=10pt}

\begin{document}

\title[Hyperbolic L-space knots and their formal semigroups]{Hyperbolic L-space knots and their formal semigroups}


\author{Masakazu Teragaito}
\address{Department of Mathematics and Mathematics Education, Hiroshima University,
1-1-1 Kagamiyama, Higashi-hiroshima, Japan 739-8524.}
\email{teragai@hiroshima-u.ac.jp}
\thanks{The author  has been supported by
JSPS KAKENHI Grant Number 20K03587. }

\subjclass[2020]{Primary 57K10} 

\date{}


\commby{}

\begin{abstract}
For an L-space knot, the formal semigroup is defined from its Alexander polynomial.
It is not necessarily a semigroup.
That is, it may not be closed under addition.
There exists an infinite family of hyperbolic L-space knots whose formal semigroups
are semigroups generated by three elements.
In this paper, we give the first infinite family of hyperbolic L-space knots whose formal semigroups
are semigroups generated by five elements.
\end{abstract}

\maketitle


\section{Introduction}

A knot  in the $3$--sphere is called an \textit{L--space knot\/}  if it admits a Dehn surgery yielding an L--space.
An L--space $Y$ is a rational homology $3$-sphere with the simplest Heegaard Floer homology, that is,
$\mathrm{rank}\,\widehat{HF}(Y)=|H_1(Y;\mathbb{Z})|$.
Typical examples are the knots admitting lens space surgeries, such as torus knots.
There are several known constraints for L--space knots \cite{N, OS}.
Such a knot $K$ is fibered, and its Alexander polynomial $\Delta_K(t)$ has the form of
\begin{equation}\label{eq:lspaceknot}
\Delta_K (t)=1-t^{a_1}+t^{a_2}-\dots+t^{a_{2k}},
\end{equation}
where $0<a_1<a_2<\dots <a_{2k}$ for some $k$, and $a_{2k}$ equals to twice of the knot genus.
Also, it is known that $a_1=1$ by \cite{HW}.
In general, it seems that there remained little to be clarified about the distribution of $a_i$.
See \cite{Ta1,Ta2} for lens space surgery case.

Wang \cite{W} introduced the \textit{formal semigroup\/} $\mathcal{S}$ for an L--space knot $K$.
It is a set of nonnegative integers defined from the formal power series expansion
\[
\frac{\Delta_K(t)}{1-t}=\Sigma_{s\in \mathcal{S}}t^s \in \mathbb{Z}[[t]].
\]
The form of (\ref{eq:lspaceknot}) implies that $0\in \mathcal{S}$.
Hence, if a formal semigroup is a semigroup, then it is a monoid.
Nevertheless, we use the term  ``formal semigroup'' in deference to previous research.

Essentially, \cite{BL0,BL} discuss the same notion precedently.
In \cite{RR},  it is called the support of the Turaev torsion.
We remark that $\Delta_K(t)/(t-1)$ is usually called the Reidemeister--Milnor torsion or Turaev torsion in literatures.

For example, a torus knot of type $(2,7)$ has the Alexander polynomial
$1-t+t^2-t^3+t^4-t^5+t^6$.  Hence $\mathcal{S}=\{0,2,4\}\cup \mathbb{Z}_{\ge 6}$.
Another typical example is the $(-2,3,7)$--pretzel knot.
Its Alexander polynomial is $1-t+t^3-t^4+t^5-t^6+t^7-t^9+t^{10}$, so
$\mathcal{S}=\{0,3,5,7,8\}\cup \mathbb{Z}_{\ge 10}$.
(In this paper, we use the notation $\mathbb{Z}_{\ge m}$ for the set of integers bigger than or equal to $m$.)

In general, a formal semigroup is a subset of $\mathbb{Z}_{\ge 0}$, and 
we use the addition as the binary operation.
As seen from the above example, a formal semigroup is not necessarily closed under the addition.
It is an easy and known fact that a torus knot of type $(p,q) \ (p,q>0)$ has  the formal semigroup
$\langle p,q\rangle\ (=\{ap+bq\mid a,b\ge 0\})$, which
is a semigroup (see \cite[Example 1.10]{BL}).
Also, the formal semigroup of an iterated torus L--space knot is a semigroup \cite{W}.
However,  this is not the case for the $(-2,3,7)$--pretzel knot, because
$3\in \mathcal{S}$ but $6\not\in \mathcal{S}$.
More generally, it is straightforward to verify that any hyperbolic Montesinos L--space knot
has the formal semigroup which is not a semigroup. 
Because such a knot is known to be the $(-2,3,2n+1)$--pretzel knot with $n\ge 3$ by \cite{BMo},
and its Alexander polynomial given by \cite{H} immediately implies that
$3\in \mathcal{S}$ but $6\not\in \mathcal{S}$.
Also, we checked that most of hyperbolic Berge knots have formal semigroups which are not semigroups.
Hence it is not too much to say that  the formal semigroup of a hyperbolic L--space knot
is less likely to become an actual semigroup.

In \cite[Question 2.8]{W}, Wang asked if there exists  an L--space knot which is not an iterated torus knot and whose 
formal semigroup is a semigroup.
As explained in \cite{BK},
the author found two hyperbolic L--space knots \texttt{K8\_201} and \texttt{K9\_449} whose formal semigroups are actual semigroups.
Indeed, they are the only knots among $630$ hyperbolic L--space knots listed by Dunfield.
(This list is found in \cite{A}.  There were two unclassified knots in Dunfield's data, but they are confirmed
to be L--space knots by \cite{BKM}. The formal semigroups of these two are not semigroups.)
On the other hand, Baker and Kegel \cite{BK} show that \texttt{K9\_449} is the only knot
among Dunfield's list that is not the closure of a positive braid,
and try to generalize it to an infinite family of hyperbolic L--space knots $\{K_n\}$, where $K_1$ is \texttt{K9\_449}.
The author also found that their knots give the first infinite family of hyperbolic L--space knots
whose formal semigroups are semigroups.
More precisely, the formal semigroup of $K_n$ is $\langle 4,4n+2,4n+5\rangle$.
See \cite{BK} for more details.

For a finite set of positive integers $\{p_1,p_2,\dots, p_k\}$,
\[
\langle p_1,p_2,\dots, p_k\rangle=\{a_1p_1+a_2p_2+\dots+a_kp_k \mid a_i\in \mathbb{Z}_{\ge 0} \}
\]
is a semigroup under the addition. 
Since we include 0 as a coefficient, the identity element $0$ is excluded from generators.
For such a semigroup, the rank is defined to be the  minimal cardinality of a generating set.

Thus, the formal semigroup of the hyperbolic L--space knot $K_n$ in \cite{BK} is a semigroup of rank three.
On the other hand, the cabling formula of \cite{W} implies that
the semigroup of an iterated torus L--space knot can have arbitrarily high rank.
Then it is natural to ask the following.

\begin{question}
Does there exist a hyperbolic L--space knot whose formal semigroup is a semigroup with arbitrarily high rank?
\end{question}

The purpose of the present paper is to construct a new family of hyperbolic L--space knots
whose formal semigroups are semigroups of rank  $5$.

\begin{theorem}\label{thm:main}
There exists an infinite family of hyperbolic L--space knots whose formal semigroups
are semigroups of rank $5$.
\end{theorem}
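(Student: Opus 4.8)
The plan is to reproduce, at rank $5$, the strategy behind the rank--three family of \cite{BK}: construct an explicit infinite family $\{K_n\}_{n\ge 1}$ of hyperbolic L--space knots, compute their Alexander polynomials in closed form, and then read off the formal semigroups directly from the power series expansion.

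First I would fix a two--component link $L=K_0\cup c$ in $S^3$ with $c$ an unknot, chosen so that adding $n$ full twists along the disk bounded by $c$ (equivalently, $-1/n$--surgery on $c$) turns $K_0$ into a knot $K_n$, with $K_1$ a known hyperbolic L--space knot that is not the closure of a positive braid; twisted torus knots provide an alternative source of such a family. One verifies that the exterior of $L$ is hyperbolic. Then Thurston's hyperbolic Dehn surgery theorem gives hyperbolicity of $K_n$ for all but finitely many $n$, and the remaining small values of $n$ are checked individually, so that every $K_n$ is hyperbolic. Since the Alexander polynomials computed below have degree tending to infinity, the $K_n$ are pairwise distinct and the family is genuinely infinite.

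Next I would show that every $K_n$ is an L--space knot. The cleanest route is to exhibit an L--space surgery on $K_n$ by tracking a lens--space (or Seifert fibered) surgery on $K_0$ through the twisting region: if the surgery curve interacts controllably with the twisting disk of $c$, the surgered manifolds form a family of L--spaces, whence each $K_n$ is an L--space knot; alternatively one invokes a twisting result guaranteeing that the L--space--knot property is preserved, anchored at $K_1$. In parallel I would compute $\Delta_{K_n}(t)$ via the Torres formula, equivalently the twisting formula for Alexander polynomials: from the two--variable Alexander polynomial $\Delta_L(t,x)$ of $L$ one obtains $\Delta_{K_n}(t)$ as an explicit alternating polynomial of the form (\ref{eq:lspaceknot}) whose exponents $a_i$ depend (piecewise) linearly on $n$.

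Finally, with a closed formula for $\Delta_{K_n}(t)$ in hand, I would expand $\Delta_{K_n}(t)/(1-t)$, read off the formal semigroup $\mathcal{S}_n$, guess five integers $p_1(n)<\dots<p_5(n)$ linear in $n$ from small cases, and prove $\mathcal{S}_n=\langle p_1(n),\dots,p_5(n)\rangle$. Writing $G=\mathbb{Z}_{\ge 0}\setminus\langle p_1(n),\dots,p_5(n)\rangle$ for the finite gap set, the identity $\sum_{s\in S}t^s=\frac{1}{1-t}-\sum_{g\in G}t^g$ shows that this is equivalent to the polynomial identity $\Delta_{K_n}(t)=1-(1-t)\sum_{g\in G}t^g$; since the exponents on both sides are piecewise linear in $n$, the verification reduces to finitely many residues of $n$. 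Minimality of $\{p_1(n),\dots,p_5(n)\}$ as a generating set, hence that the rank is exactly $5$, follows by checking that no $p_i(n)$ lies in the semigroup generated by the other four, a short argument about sizes and congruence classes. I expect this last stage to be the main obstacle: the reason a ``generic'' hyperbolic L--space knot has a formal semigroup that is \emph{not} closed under addition is precisely that its Alexander polynomial exponents do not match the gap pattern of any numerical semigroup, so the construction must be engineered backwards, from a chosen rank--five semigroup $\langle p_1(n),\dots,p_5(n)\rangle$, to force this match uniformly in $n$.
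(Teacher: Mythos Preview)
Your high–level plan coincides with the paper's: produce an explicit twist family $\{K_n\}$, compute $\Delta_{K_n}(t)$ from the multivariable Alexander polynomial of a base link via Torres, expand $\Delta_{K_n}(t)/(1-t)$, and verify that the resulting formal semigroup equals a rank--five numerical semigroup with generators linear in $n$. The paper carries this out with $K_n$ the closure of an explicit positive $6$--braid, obtains $\mathcal{S}_n=\langle 6,\,6n+4,\,6n+8,\,12n+11,\,12n+15\rangle$, and the rank computation is exactly the congruence/size argument you describe.

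There are, however, three substantive differences in execution that you should be aware of. First, the base link in the paper has \emph{two} unknotted twisting components, not one; the Alexander polynomial calculation goes through a three--variable polynomial and two applications of the Torres condition. Second, and most importantly, the L--space property is not obtained by ``tracking a lens--space surgery through the twisting region'' or by any general twisting result; the paper instead puts $K_n$ together with the twisting circles in strongly invertible position, applies the Montesinos trick to realize $(18n+22)$--surgery on $K_n$ as the double branched cover of an explicit link $\ell$, and then resolves $\ell$ at two crossings into Montesinos links whose Seifert--fibered double covers are checked to be L--spaces via the Lisca--Stipsicz criterion. Your proposed mechanism is vague precisely at this step, which is where the real work lies. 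Third, hyperbolicity is proved for \emph{every} $n\ge 1$ (not just cofinitely many) by showing $K_n$ has tunnel number one, hence is prime, ruling out torus knots via the rank of $\mathcal{S}_n$, and eliminating the satellite case through the Morimoto--Sakuma classification of tunnel--number--one satellites combined with the $3$--braid L--space knot classification. Your Thurston--plus--finite--check route would suffice for the theorem as stated (an infinite family is enough), but the paper's argument is uniform in $n$. Finally, a small correction: the paper's $K_n$ \emph{are} closures of positive braids; the ``not a positive braid closure'' feature belongs to the rank--three family of \cite{BK}, not to this one, and plays no role here.
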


The formal semigroup of an L--space knot is related to its knot Floer complex (see \cite{K}).
However, the meaning of closedness under addition in the formal semigroup seems to be missing.

\section{The family of hyperbolic L--space knots}

For any integer $n\ge 1$, let $\beta_n$ be the $6$--braid defined as
\[
\beta_n=(\sigma_3 \sigma_2\sigma_4 \sigma_1\sigma_3\sigma_5\sigma_2\sigma_4\sigma_3)^{2n+1}\sigma_3\sigma_2\sigma_1\sigma_1\sigma_2\sigma_3\sigma_2\sigma_1\sigma_2\sigma_2,
\]
where $\sigma_i$ is the standard generator in the $6$--strand braid group.
See Fig.~\ref{fig:braid}.
Let $K_n$ be the knot obtained as the closure of $\beta_n$.

\begin{figure}[tb]
\begin{center}
\includegraphics[scale=0.5]{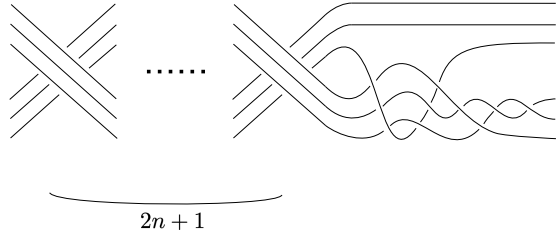}
\caption{The braid $\beta_n$. The knot $K_n$ is the closure of $\beta_n$.}\label{fig:braid}
\end{center}
\end{figure}

Since $\beta_n$ is positive, $K_n$ is fibered (\cite{S}), and its genus $g(K_n)$ is $9n+7$
as seen from an Euler characteristic calculation
\[
(\text{number of strand})-(\text{number of crossing})=
6-(9(2n+1)+10)=1-2g(K_n).
\]

We remark that $\beta_0$ can be defined, but $K_0$ is the $(2,11)$--cable of the trefoil.
Hence this is excluded from our interest.

Theorem \ref{thm:main} is the direct consequence of the next theorem.

\begin{theorem}\label{thm:main1}
For $n\ge 1$, let $K_n$ be the knot defined as above.
Then we have\textup{:}
\begin{itemize}
\item[\textup{(1)}]
$K_n$ is a hyperbolic L--space knot\textup{;} and
\item[\textup{(2)}]
its formal semigroup is a semigroup 
$\langle 6,6n+4,6n+8,12n+11,12n+15\rangle$.
\end{itemize}
\end{theorem}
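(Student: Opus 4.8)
The plan is to establish parts (1) and (2) more or less independently, then reconcile the Alexander polynomial computation with the claimed generators. For part (2) I would first compute $\Delta_{K_n}(t)$ explicitly. Since $K_n$ is the closure of the positive braid $\beta_n$ on $6$ strands, the Seifert surface from Seifert's algorithm is a fiber surface, and the Alexander polynomial can be extracted from the reduced Burau representation of $\beta_n$ (or, equivalently, from the Seifert matrix coming from the braid word). The key structural input is that $\beta_n$ is built from a block $w = \sigma_3\sigma_2\sigma_4\sigma_1\sigma_3\sigma_5\sigma_2\sigma_4\sigma_3$ raised to the power $2n+1$, followed by a fixed tail word; so the Burau matrix is $B(w)^{2n+1}$ times a fixed matrix, and $\det(I - B(\beta_n)(t))$ will be a polynomial in $t$ whose coefficients depend on $n$ in a controlled (eventually linear/periodic) way. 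I expect the answer to have the L-space-knot form \eqref{eq:lspaceknot} with $a_{2k} = 2g(K_n) = 18n+14$, and I would organize the exponents by writing $\Delta_{K_n}(t)/(1-t) = \sum_{s\in\mathcal S}t^s$ and reading off $\mathcal S$.

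Next I would prove (1). Fiberedness is already noted (positive braid). For the L-space property I would aim to identify $K_n$ with a knot obtained by a Dehn surgery construction known to produce L-space knots — e.g. exhibiting $K_n$ as lying in a once-punctured-torus bundle or as a curve in a surgery description where one can apply the surgery-characterization of L-space knots (an L-space knot is detected by a single positive L-space surgery, and by \cite{OS, N} it suffices to check that $\Delta_{K_n}$ has the alternating form \eqref{eq:lspaceknot} together with knowing some surgery is an L-space). A clean route: show directly that some $(2g(K_n)-1)$-surgery (or other explicit slope) on $K_n$ is an L-space, perhaps by recognizing $\beta_n$'s closure inside a family whose branched or cyclic covers are understood, or by using the fact that a positive braid closure whose Alexander polynomial has the form \eqref{eq:lspaceknot} and whose Seifert genus equals the Alexander-polynomial-predicted genus is frequently an L-space knot via the Ozsv\'ath–Szab\'o lens-space-surgery machinery. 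Hyperbolicity I would verify by producing an explicit triangulation or by a SnapPy-type computation for small $n$ together with a Thurston–Jorgensen argument (the $K_n$ are mutually distinct since their genera $9n+7$ are distinct, and they cannot be torus or satellite knots because — for satellites — an L-space satellite is an iterated torus knot by \cite{W} and the companion's semigroup would have to divide into $\langle 6,6n+4,6n+8,12n+11,12n+15\rangle$, which one checks is impossible; for torus knots the semigroup would be $2$-generated).

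Finally I would reconcile the two halves: having computed $\mathcal S$ from $\Delta_{K_n}$, I must show $\mathcal S = \langle 6, 6n+4, 6n+8, 12n+11, 12n+15\rangle$ as sets, and that this semigroup genuinely has rank $5$. The set equality is a finite check once one notes that both sides agree with $\mathbb Z_{\ge N}$ for $N$ slightly larger than the Frobenius number (here $N$ around $2g(K_n) = 18n+14$), so only finitely many — in fact $O(n)$, but with an $n$-uniform pattern — small values need matching; I would present this as: the gaps of $\langle 6,6n+4,6n+8,12n+11,12n+15\rangle$ are exactly the exponents missing from $\Delta_{K_n}(t)/(1-t)$, verified by an explicit list modulo $6$. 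Rank $5$ follows because $6n+4, 6n+8 \not\equiv 0 \pmod 6$ forces two generators in distinct nonzero residues mod $6$, and $12n+11, 12n+15$ are each too large to be nonnegative combinations of the three smaller ones (both lie below $2(6n+4)$ yet are not themselves multiples of $6$ or equal to $6n+4$ or $6n+8$ plus a multiple of $6$), so none of the five listed generators is redundant. The main obstacle I anticipate is part (1): extracting the L-space property rigorously and uniformly in $n$ — the Alexander polynomial having the right shape is necessary but not sufficient, so one needs an actual L-space surgery, and producing that surgery for the whole family (rather than checking finitely many with a computer) is the crux; I would expect the paper to handle this via an explicit surgery/Montesinos-trick description of $\beta_n$ that I would reverse-engineer from the braid word.
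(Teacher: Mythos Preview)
Your outline for the Alexander polynomial and the formal semigroup is workable, though the paper takes a different route: rather than the Burau representation, it realizes $K_n$ as a twist family on a three-component link $K\cup C_1\cup C_2$, computes the multivariable Alexander polynomial of that link once, and then obtains $\Delta_{K_n}$ for all $n$ via the Torres condition and a change of variables. Your rank-$5$ check also contains a slip: both $12n+11$ and $12n+15$ exceed $2(6n+4)=12n+8$, so the inequality you quote does not apply; the paper instead argues by parity ($6$, $6n+4$, $6n+8$ are all even while $12n+11$ is odd) and then disposes of $12n+15$ by a short direct check.

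The substantive gap is in your hyperbolicity argument. The assertion that ``an L-space satellite is an iterated torus knot by \cite{W}'' is false: \cite{W} proves that iterated torus L-space knots have formal semigroups which are genuine semigroups, not any converse, and there are L-space satellite knots which are not iterated torus knots (cables of hyperbolic L-space knots, for instance). So the satellite case cannot be dismissed in one line and requires real work. The paper first exhibits an explicit unknotting tunnel to show $K_n$ has tunnel number one, then invokes the Morimoto--Sakuma classification \cite{MS} to force any companion to be a torus knot. Bridge-number considerations restrict the companion to $T(2,q)$ or $T(3,q)$ and the pattern to wrapping number at most $3$; the braided-pattern theorem of \cite{BM} and the Lee--Vafaee classification of $3$-braid L-space knots \cite{LV}, combined with Lee's list of unknotted twisted torus knots \cite{L}, reduce the possible patterns to a short explicit table, which is then eliminated by genus and determinant computations. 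None of this machinery appears in your proposal. Your instinct on the L-space side is correct, however: the paper does use the Montesinos trick to show that $(18n+22)$-surgery is an L-space, via a resolution tree whose leaves are Montesinos links with Seifert fibered branched double covers, certified as L-spaces by the Lisca--Stipsicz criterion \cite{LS}.
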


The proof of Theorem \ref{thm:main1} is divided in the remaining sections.
In Section \ref{sec:montesinos},
we prove that $(18n+22)$--surgery on  $K_n$  yields an L--space by using the Montesinos trick.
In Section \ref{sec:alex}, we calculate the Alexander polynomial and the formal semigroup.
Finally, we prove that $K_n$ is hyperbolic in Section \ref{sec:hyp}.

\section{Montesinos trick and L--space surgery}\label{sec:montesinos}

In this section, we prove that each $K_n$ admits a Dehn surgery yielding an L--space
by using the Montesinos trick.
The Montesinos trick is the standard tool introduced by \cite{Mon}.
For a strongly invertible link $L$ in $S^3$,
the resulting manifold by Dehn surgery on $L$ is described
as the double branched cover of some link $\ell$.
The surgery corresponds to a tangle replacement.
For details, see \cite{MT,Wa}.

For $K_n$, Fig.~\ref{fig:surgery} shows a surgery diagram of $K\cup C_1\cup C_2$,
where performing  $-1/n$--surgery on $C_1$ and $1/2n$--surgery on $C_2$ changes $K$ to $K_n$.
This diagram can be changed into a strongly invertible position as illustrated in Fig.~\ref{fig:mont1}.
We remark that $r$--surgery on $K$ corresponds to $(18n+r)$--surgery on $K_n$.

\begin{figure}[tb]
\begin{center}
\includegraphics[scale=0.4]{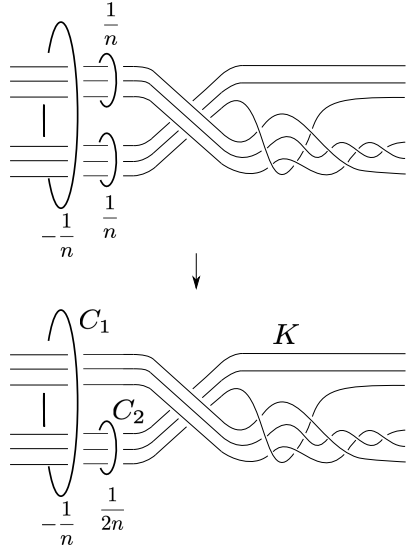}
\caption{A surgery diagram $K\cup C_1\cup C_2$.
Performing $-1/n$--surgery on $C_1$ and $1/2n$--surgery on $C_2$ changes $K$ to our $K_n$.}\label{fig:surgery}
\end{center}
\end{figure}

\begin{figure}[tb]
\begin{center}
\includegraphics[scale=0.4]{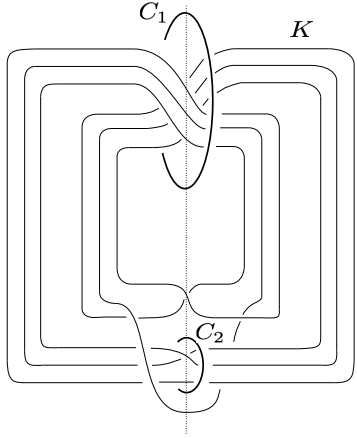}
\caption{A strongly invertible position of the link $K\cup C_1\cup C_2$ with an axis.}\label{fig:mont1}
\end{center}
\end{figure}

\begin{theorem}\label{thm:lspace}
For $n\ge 1$, $(18n+22)$--surgery on $K_n$ yields an L--space.
\end{theorem}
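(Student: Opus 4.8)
The plan is to realize $(18n+22)$-surgery on $K_n$ as the double branched cover of an explicit link in $S^3$, and then identify that link as an alternating (or more generally quasi-alternating) link, so that its double branched cover is an L-space by the classical fact that double branched covers of quasi-alternating links are L-spaces. The starting point is the surgery description $K\cup C_1\cup C_2$ in Fig.~\ref{fig:surgery} together with its strongly invertible repositioning in Fig.~\ref{fig:mont1}: under the quotient by the strong involution, the three-component link with its surgery coefficients becomes a tangle, and Dehn surgery becomes rational tangle replacement. Here the relevant coefficient is $r = 22$ on $K$ (since $r$-surgery on $K$ is $(18n+r)$-surgery on $K_n$), while the auxiliary curves $C_1, C_2$ carry the fixed coefficients $-1/n$ and $1/2n$; these all descend to rational tangles with the integers $n$ and $2n$ appearing as twist parameters in the quotient diagram.

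First I would carefully push the strongly invertible diagram of Fig.~\ref{fig:mont1} through the Montesinos quotient, drawing the image link $\ell_n$ as a Montesinos-type or pretzel-type link whose rational parameters are explicit linear or Möbius functions of $n$ (the $-1/n$ and $1/2n$ surgeries producing tangles of slope roughly $\mp n$ and $2n$, and the integer $22$ producing a fixed tangle). I would fix a convention (e.g.\ signs of half-twists) so that the resulting diagram has a consistent checkerboard coloring. Then I would verify, for every $n\ge 1$, that $\ell_n$ admits a reduced alternating diagram — or, if it does not, that it is quasi-alternating by Champanerkar–Kofman's criterion (a link obtained from a quasi-alternating link by replacing an alternating-position tangle by one with more of the same-signed twists is again quasi-alternating, and the base case $n=1$ can be checked by hand against the tables or by the Goeritz matrix). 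Once $\ell_n$ is quasi-alternating, $\Sigma_2(\ell_n)$ is an L-space, and since $\Sigma_2(\ell_n)$ is by construction the result of $(18n+22)$-surgery on $K_n$, the theorem follows.

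The main obstacle I anticipate is the bookkeeping in the quotient step: keeping track of the precise slopes of all three rational tangles (one must get the arithmetic of the Montesinos trick exactly right, including the effect of the linking between $K$ and $C_1, C_2$ and the framing conventions), and then recognizing the resulting family of links in a uniform way. In particular, it is easy for the quotient of a genuinely complicated strongly invertible link to come out as a link that is alternating but whose alternating diagram is not the one you first draw, so some diagrammatic simplification (flypes, isotopy of rational tangles) will be needed before the alternation is visible. A secondary point is that one should double-check the surgery coefficient: we want exactly $+22$ on $K$, and the Euler-characteristic/genus computation gives $2g(K_n) = 18n+14$, so $(18n+22)$-surgery corresponds to slope $2g+8$; it would be reassuring (though not logically necessary here, since we have the Montesinos argument) that this is a small positive surgery of the sort that commonly yields L-spaces for L-space knots. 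If the direct alternating identification proves stubborn, the fallback is to compute the Goeritz matrix of $\ell_n$ and show it is negative (or positive) definite, which by Ozsváth–Szabó again forces $\Sigma_2(\ell_n)$ to be an L-space.
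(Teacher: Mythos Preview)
Your setup via the Montesinos trick matches the paper's exactly: one takes the strongly invertible diagram of $K\cup C_1\cup C_2$ in Fig.~\ref{fig:mont1}, performs the tangle replacements corresponding to $22$-surgery on $K$ and $-1/n$, $1/2n$ on $C_1,C_2$, and obtains a link $\ell$ whose double branched cover is $(18n+22)$-surgery on $K_n$.  So the first half of your plan is fine.

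The genuine gap is in the second half.  You propose to show that $\ell$ is alternating or quasi-alternating (perhaps via Champanerkar--Kofman), with a Goeritz-definiteness fallback.  The paper's own argument strongly suggests this will not work.  After simplifying $\ell$, the paper resolves it at a crossing $c$ into $\ell_0$ and $\ell_\infty$ with $\det\ell=\det\ell_0+\det\ell_\infty$, and then further resolves $\ell_\infty$ at a crossing $d$.  The three terminal links $\ell_0$, $\ell_{\infty 0}$, $\ell_{\infty\infty}$ are explicit Montesinos links, and the paper notes (citing Issa's classification) that for $n>1$ they are \emph{not} quasi-alternating.  Since the quasi-alternating class is closed under this resolution step, $\ell$ cannot be quasi-alternating via the crossing $c$; and there is no reason to expect another crossing to do better.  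Your Champanerkar--Kofman idea also runs into the difficulty that $n$ enters the diagram in two separate twist regions with opposite signs ($-1/n$ and $+1/2n$), so the monotone twist-insertion criterion does not apply cleanly.

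What the paper does instead is use the more general triad argument: by Ozsv\'ath--Szab\'o, if $\det\ell=\det\ell_0+\det\ell_\infty$ and the double branched covers of $\ell_0,\ell_\infty$ are L-spaces, then so is that of $\ell$ --- with no quasi-alternating hypothesis.  The terminal Montesinos links have Seifert fibered double branched covers, and one checks the L-space condition directly using the Lisca--Stipsicz/Lisca--Mati\'c criterion for small Seifert fibered spaces (a short arithmetic verification in each case).  This is the missing idea in your plan: you need to leave the quasi-alternating world and argue with Seifert L-space criteria at the leaves of a resolution tree.

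Finally, your fallback is not correct as stated: definiteness of a Goeritz matrix from an arbitrary diagram does not by itself force the double branched cover to be an L-space.  Greene's characterization of alternating links requires \emph{both} checkerboard Goeritz forms to be definite, and in any case the relevant Ozsv\'ath--Szab\'o input here is the surgery-triad inequality, not a definiteness statement.
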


\begin{proof}
For the link in Fig.~\ref{fig:mont1},
take a quotient by the involution around the axis.
Note that the surgery coefficient on $K$ is $22$.
In the diagram of Fig.~\ref{fig:mont1}, the blackboard framing (or writhe) of $K$ is $19$.
Hence, $22$--surgery on $K$ corresponds to the tangle replacement by the $3$--tangle.

Rational tangle replacements corresponding to the surgeries on $K, C_1,C_2$ yield a link $\ell$, whose
double branched cover is the result of $(18n+22)$--surgery on $K_n$.
Figures \ref{fig:mont2}, \ref{fig:mont3}, \ref{fig:mont4} and \ref{fig:mont5} show the deformation of $\ell$.

\begin{figure}[tb]
\begin{center}
\includegraphics[scale=0.4]{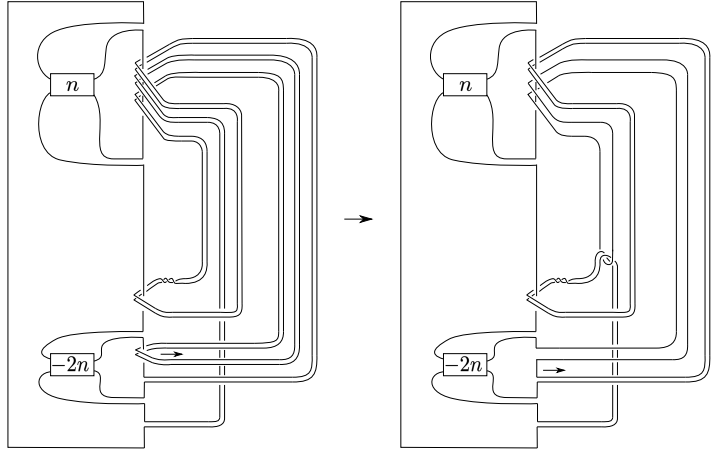}
\caption{A deformation of $\ell$, where a rectangle box means horizontal half-twists.
The integer indicates the number of half-twists, which is right-handed if it is positive, or left-handed otherwise.
}\label{fig:mont2}
\end{center}
\end{figure}

\begin{figure}[tb]
\begin{center}
\includegraphics[scale=0.4]{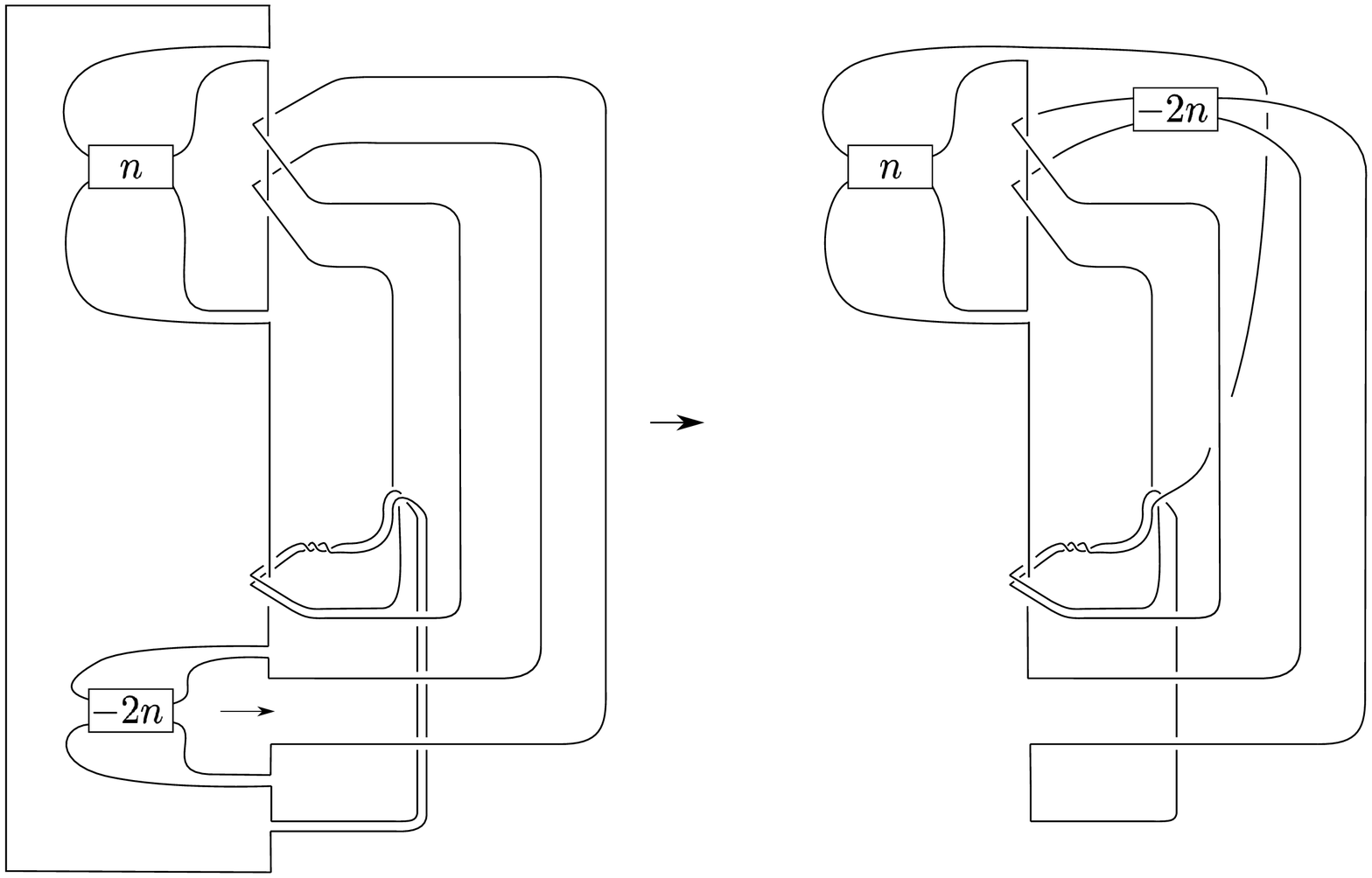}
\caption{Continued from Figure \ref{fig:mont2}.}\label{fig:mont3}
\end{center}
\end{figure}

\begin{figure}[tb]
\begin{center}
\includegraphics[scale=0.4]{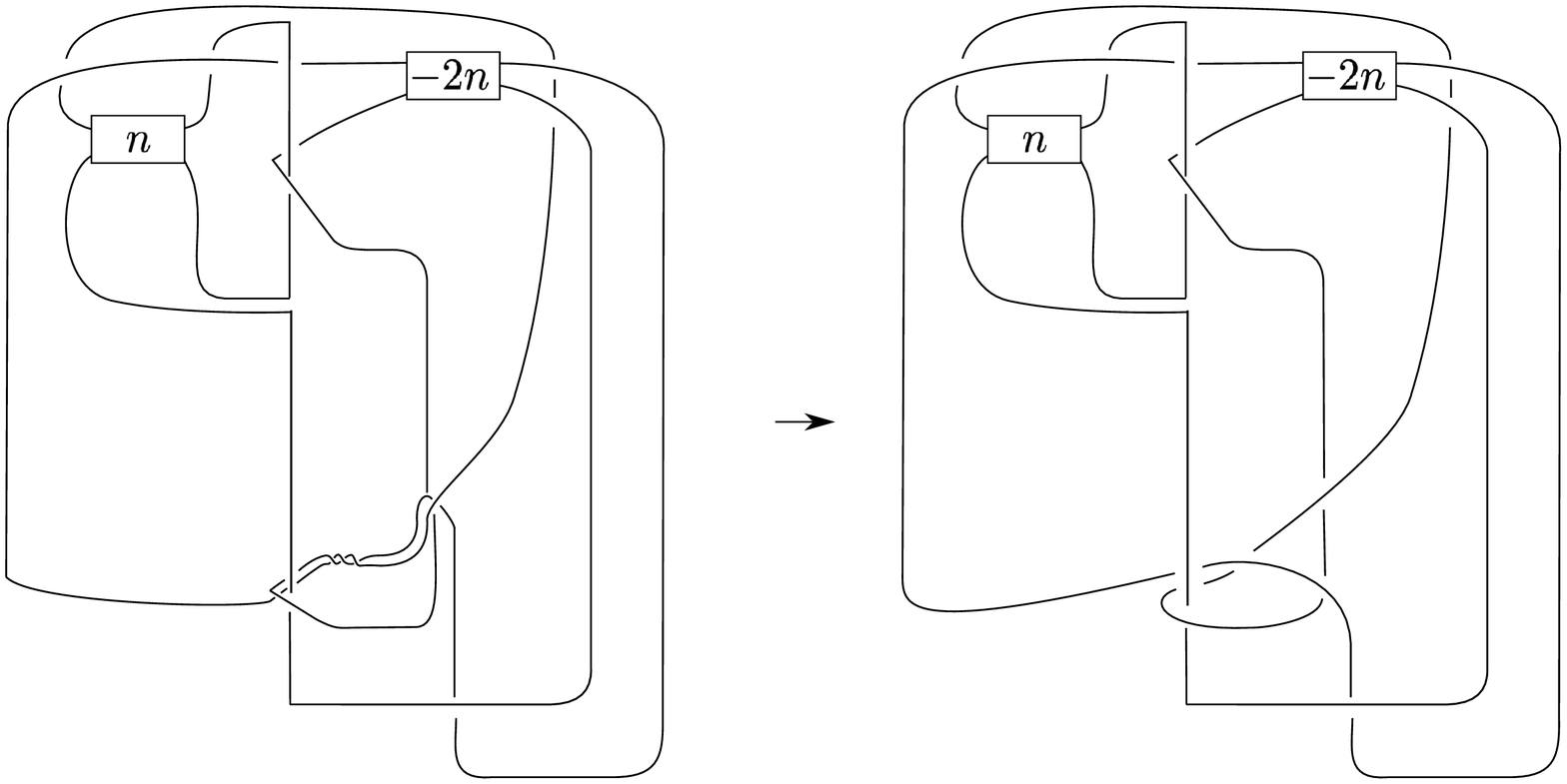}
\caption{Continued from Figure \ref{fig:mont3}.}\label{fig:mont4}
\end{center}
\end{figure}

\begin{figure}[tb]
\begin{center}
\includegraphics[scale=0.4]{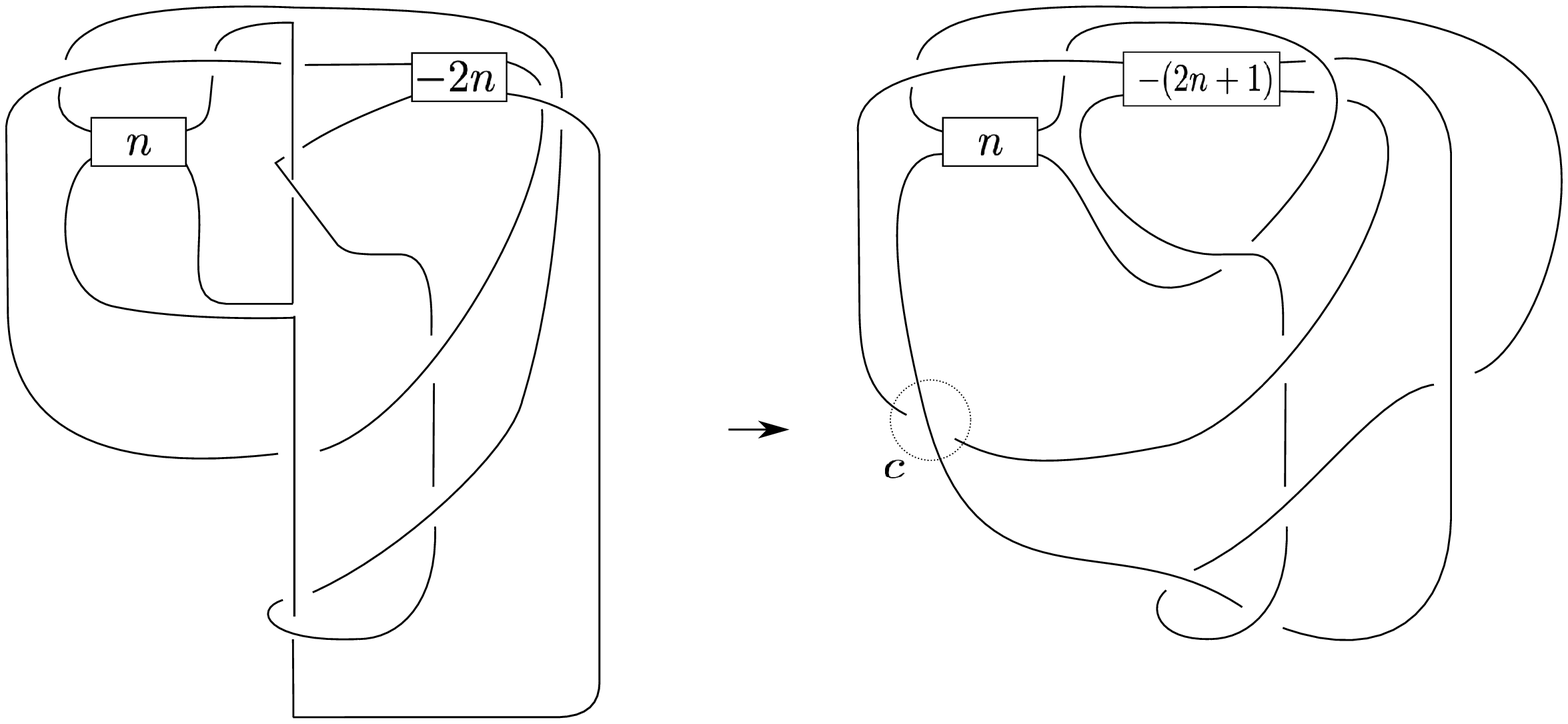}
\caption{Continued from Figure \ref{fig:mont4}.}\label{fig:mont5}
\end{center}
\end{figure}


In the next lemma, we prove that the double branched cover of the link $\ell$ is an L--space, so the proof is complete.
\end{proof}

\begin{lemma}
The double branched cover of the link $\ell$ is an L--space.
\end{lemma}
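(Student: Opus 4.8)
The plan is to show that the link $\ell$, which is presented (after the tangle calculus of Figures~\ref{fig:mont2}--\ref{fig:mont5}) as a Montesinos link with a bounded number of rational tangles and at most one non-integer slope depending on $n$, is in fact a \emph{quasi-alternating} link, or more precisely a member of a family whose double branched covers are known to be L-spaces. Recall that the double branched cover of a quasi-alternating link is an L-space \cite{OS}, and, more usefully for us, that the double branched cover of a Montesinos link $M(e; \beta_1/\alpha_1, \dots, \beta_k/\alpha_k)$ is a Seifert fibered space over $S^2$, for which there is an explicit combinatorial criterion (due to Lisca--Stipsicz and to Boyer--Gordon--Watson) deciding when it is an L-space in terms of the slopes. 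So the first step is simply to read off from Figure~\ref{fig:mont5} the precise Montesinos presentation of $\ell$: identify the integer framing $e$ and the rational slopes, noting which of them are constant and which carry the parameter $n$ (one expects a slope like $-1/n$ or $1/(2n)$ surviving from the surgeries on $C_1,C_2$, together with a fixed $3$-tangle from the $22$-surgery on $K$).

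Second, I would feed this data into the L-space criterion for Seifert fibered spaces: a Seifert fibered rational homology sphere over $S^2$ is an L-space if and only if it does not admit a horizontal foliation, equivalently (after normalizing so that all the $\beta_i/\alpha_i \in (0,1)$ and collecting the integer part into $e$) the Euler-number-type inequality on the slopes holds. The point of the parametrized family is that the only $n$-dependent slope, say $\pm 1/n$, tends to $0$ as $n \to \infty$, so the relevant inequality is either satisfied for all $n \ge 1$ outright, or reduces to checking the finitely many small cases $n=1,2$ by hand and then a monotonicity argument. This is the computational heart of the lemma, but it is genuinely just an inequality check once the presentation is in hand.

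Alternatively — and this is probably the cleaner route if the tangle pictures do \emph{not} immediately yield a Montesinos link — I would argue directly that $\ell$ is obtained from an alternating (or quasi-alternating) link by a sequence of moves that preserve the L-space property of the branched double cover: for instance, adding a positive half-twist on a strand where one already has $\ge 1$ positive half-twist, which at the level of branched covers is a ``connected-sum-with-lens-space along a curve'' type operation and is known to preserve L-spaceness; or recognizing a reduced alternating diagram after a Reidemeister simplification in Figure~\ref{fig:mont5}. Concretely, if the final diagram is a pretzel or two-bridge-plus-clasp configuration, one can cite the classification of quasi-alternating pretzel links (Greene, Champanerkar--Kofman) to conclude.

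The main obstacle I anticipate is not any deep theorem but the bookkeeping: correctly tracking framings and tangle slopes through Figures~\ref{fig:mont2}--\ref{fig:mont5}, since a sign error in the blackboard framing of $K$ (stated to be $19$, so that the $22$-surgery becomes a $3$-tangle) or in the $-1/n$ versus $1/(2n)$ coefficients would change the final Montesinos slopes and could flip the L-space inequality. So the careful part of the proof will be to verify, slope by slope, that the link $\ell$ read off from Figure~\ref{fig:mont5} has double branched cover a Seifert fibered space whose slope data satisfies the no-horizontal-foliation (L-space) condition uniformly in $n \ge 1$; once that inequality is confirmed, the lemma — and hence Theorem~\ref{thm:lspace} — follows immediately.
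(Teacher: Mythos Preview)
Your proposal rests on the assumption that the link $\ell$ in Figure~\ref{fig:mont5} is itself a Montesinos link (or at least quasi-alternating), so that its double branched cover is a Seifert fibered space to which the Lisca--Stipsicz/Boyer--Gordon--Watson criterion applies directly. This is the gap: $\ell$ is \emph{not} a Montesinos link. Its double branched cover is precisely the $(18n+22)$--surgery on the hyperbolic knot $K_n$, and (as noted just after Theorem~\ref{thm:lspace}) computer evidence indicates $K_n$ has no exceptional surgeries, so this cover is hyperbolic rather than Seifert fibered. Consequently there is no Montesinos presentation to ``read off'', and the slope inequality you propose to check never materializes. Your alternative route via quasi-alternating links also fails: as the paper records, even the simpler pieces obtained after resolving $\ell$ are not quasi-alternating for $n>1$ (by Issa's classification \cite{I}), so $\ell$ certainly is not.

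What the paper actually does is resolve a crossing $c$ of $\ell$ to obtain $\ell_0$ and $\ell_\infty$, verify the determinant additivity $\det\ell=\det\ell_0+\det\ell_\infty$, and invoke the triad property of double branched covers \cite[Proposition~2.1]{OS2} together with \cite[Proposition~2.1]{OS}: if the covers of $\ell_0$ and $\ell_\infty$ are L--spaces, so is that of $\ell$. The resolution $\ell_0$ \emph{is} a Montesinos knot, and here the Lisca--Stipsicz criterion you cite is applied (Claim~\ref{cl:ell0}). The other resolution $\ell_\infty$ is still not Montesinos, so a second crossing $d$ is resolved, yielding two further Montesinos links $\ell_{\infty 0}$ and $\ell_{\infty\infty}$ whose covers are again handled by Lisca--Stipsicz (Claim~\ref{cl:ellinf}). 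So the missing idea in your plan is this two-step unskeining via triads; the Seifert L--space criterion is indeed the endgame, but only after $\ell$ has been broken into three genuine Montesinos pieces.
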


\begin{proof}
First, $\det \ell=18n+22$.
This is easily calculated from the Goeritz matrix for the checkerboard coloring of the diagram of $\ell$ in Fig.~\ref{fig:mont5}.
Let $c$ be the crossing as indicated in Fig.~\ref{fig:mont5}.
Then we have $\ell_0$ and $\ell_\infty$ by smoothing $c$ as shown in Fig.~\ref{fig:resolution}, and it is also a direct calculation to see
$\det \ell_0=4n+5$ and $\det \ell_\infty=14n+17$ from Figs.~\ref{fig:ell0} and \ref{fig:ellinf}.
Hence $\det \ell=\det \ell_0+\det \ell_\infty$ holds.

By \cite[Proposition 2.1]{OS2}, the triple of thee double branched covers of $\ell$, $\ell_0$ and $\ell_\infty$ forms a triad.
Thus  \cite[Proposition 2.1]{OS} claims that
if the double branched covers of $\ell_0$ and $\ell_\infty$ are L--spaces, then so is the double branched cover of $\ell$.

\begin{figure}[tb]
\begin{center}
\includegraphics[scale=0.6]{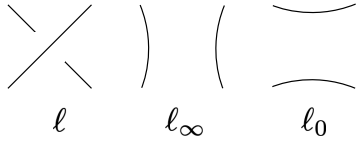}
\caption{Two resolutions.}\label{fig:resolution}
\end{center}
\end{figure}

\begin{figure}[tb]
\begin{center}
\includegraphics[scale=0.4]{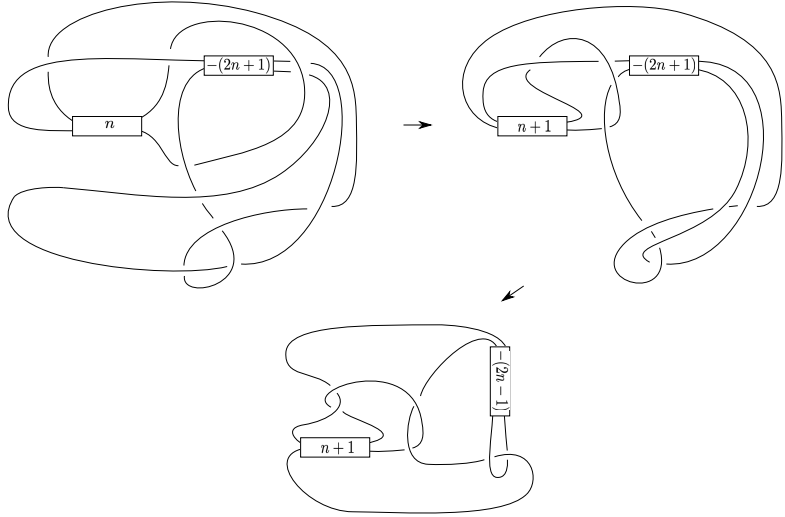}
\caption{The resolution $\ell_0$ is a Montesinos knot.}\label{fig:ell0}
\end{center}
\end{figure}

\begin{figure}[tb]
\begin{center}
\includegraphics[scale=0.4]{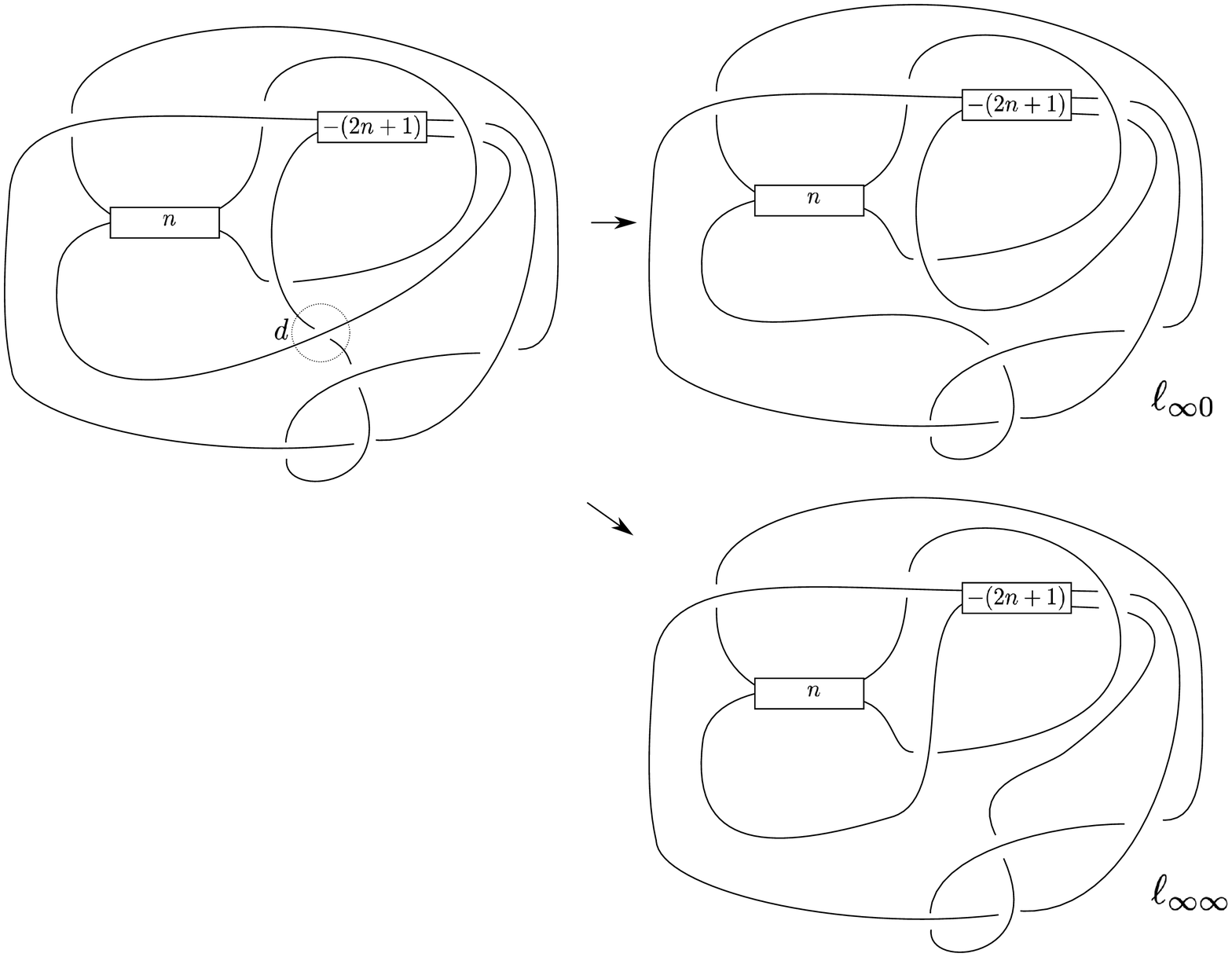}
\caption{The resolution $\ell_\infty$ and its further resolutions $\ell_{\infty 0}$ and $\ell_{\infty\infty}$ at the crossing $d$.}\label{fig:ellinf}
\end{center}
\end{figure}

We can see that $\ell_0$ is the Montesinos knot $M(-\frac{n+1}{2n+1}, \frac{1}{2}, \frac{2}{4n-1})$
as shown in Fig.~\ref{fig:ell0}.
When $n=1$, $\ell_0$ is the knot \texttt{8\_20}, which is non-alternating, but quasi-alternating.
Then the double branched cover is an L--space by \cite{OS2}.
If $n>1$, then $\ell_0$ is not quasi-alternating by \cite{I}.
Nevertheless, we can show the following.

\begin{claim}\label{cl:ell0}
The double branched cover of $\ell_0$ is an L--space.
\end{claim}

\begin{proof}[Proof of Claim \ref{cl:ell0}]
The double branched cover of $\ell_0$ is the Seifert fibered space $M(0;-\frac{n+1}{2n+1}, \frac{1}{2}, \frac{2}{4n-1})
=M(-1; \frac{n}{2n+1}, \frac{1}{2}, \frac{2}{4n-1})$.
(We use the convention of \cite{LS}, which is the same as \cite{BK}.)

Theorem 1.1 of \cite{LS} combined with  \cite{LM} claims that such a Seifert fibered space $M(-1;r_1,r_2,r_3)\ (1\ge r_1\ge r_2\ge r_3>0)$
is an L--space if and only if  there are no relatively prime integers $m>a>0$ such that 
$mr_1<a<m(1-r_2)$ and $mr_3<1$.

First, assume $n=1$.
Then $r_1=2/3$, $r_2=1/2$ and $r_3=1/3$, so $1-r_2=1/2$.
Since $r_1>1-r_2$, we have no solution $m$ satisfying $mr_1<m(1-r_2)$.

Suppose $n\ge 2$.
Then $r_1=1/2$, $r_2=n/(2n+1)$ and $r_3=2/(4n-1)$, so $1-r_2=(n+1)/(2n+1)$.

We assume that there are coprime integers $m$ and $a$ such that $m/2<a<m(n+1)/(2n+1)$ and $2m/(4n-1)<1$.
Then the first gives
\[
0<2a-m<\frac{m}{2n+1},
\]
and the second gives $m<2n-1/2$.
Combining these yields
\[
0<2a-m<\frac{4n-1}{4n+2}<1.
\]
Since $a$ and $m$ are integers, this is a  contradiction.
\end{proof}

For the other resolution $\ell_\infty$, we further perform two smoothings at the crossing $d$ as shown in Fig.~\ref{fig:ellinf}.
Then we have a link $\ell_{\infty 0}$ and a knot $\ell_{\infty\infty}$ as shown there.
In particular, a direct calculation on Fig.~\ref{fig:ellinf} (or, Figs.~\ref{fig:ellinf0} and \ref{fig:ellinfinf}) shows that $\det\ell_{\infty 0}=4n+14$ and $\det \ell_{\infty\infty}=10n+3$.
Hence $\det\ell_\infty=\det\ell_{\infty 0}+\det\ell_{\infty \infty}$ holds.

\begin{figure}[tb]
\begin{center}
\includegraphics[scale=0.4]{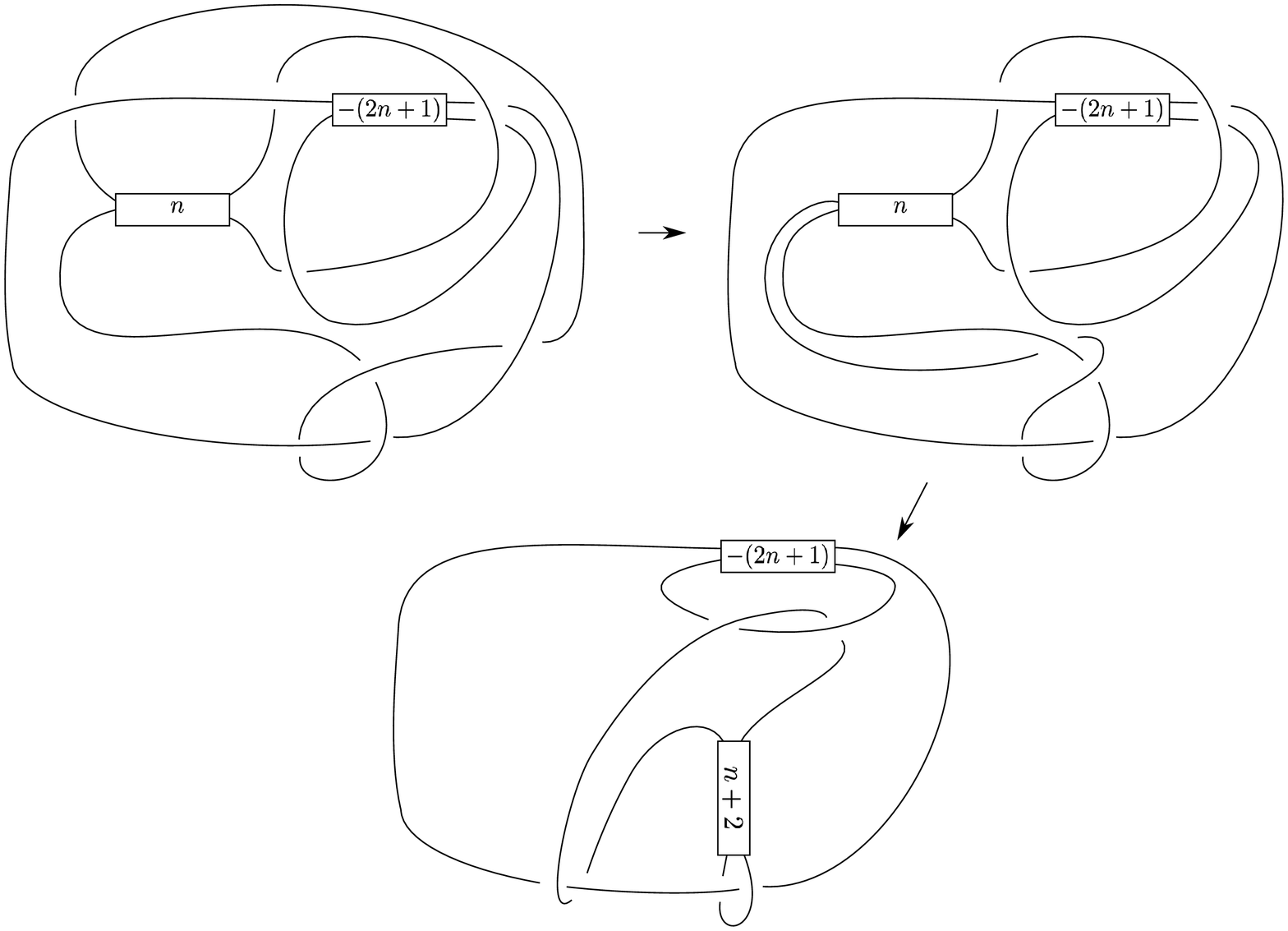}
\caption{The resolution $\ell_{\infty 0}$ is a Montesinos link.}\label{fig:ellinf0}
\end{center}
\end{figure}

\begin{claim}\label{cl:ellinf}
The double branched covers of $\ell_{\infty 0}$ and $\ell_{\infty \infty}$ are L--spaces.
\end{claim}

\begin{proof}[Proof of Claim \ref{cl:ellinf}]
The link $\ell_{\infty 0}$ is the Montesinos link $M(\frac{1}{2n+1},-\frac{1}{2},\frac{2n+3}{4n+8})$ as shown in Fig.~\ref{fig:ellinf0}.
Although this is not quasi-alternating by \cite{I}, we can show that the double branched cover is an L--space as before.
The double branched cover is the Seifert fibered space $M(0;\frac{1}{2n+1},-\frac{1}{2},\frac{2n+3}{4n+8})=
M(-1; \frac{1}{2n+1},\frac{1}{2},\frac{2n+3}{4n+8})$.
As in the proof of Claim \ref{cl:ell0}, set $r_1=1/2$, $r_2=(2n+3)/(4n+8)$ and $r_3=1/(2n+1)$.

Suppose that there are coprime integers $m$ and $a$ such that
$mr_1<a<m(1-r_2)$ and $mr_3<1$.
Then 
\[
0<2a-m<\frac{m}{2n+4}<\frac{2n+1}{2n+4}<1,
\]
a contradiction.


The link $\ell_{\infty \infty}$ is the Montesinos knot $M(-\frac{1}{2},\frac{n}{2n+1},\frac{5}{10n+7})$ as shown in Fig.~\ref{fig:ellinfinf}, which
is not quasi-alternating by \cite{I} again.
The double branched cover is the Seifert fibered space $M(0;-\frac{1}{2},\frac{n}{2n+1},\frac{5}{10n+7})=
M(-1; \frac{1}{2}, \frac{n}{2n+1},\frac{5}{10n+7})$.

\begin{figure}[tb]
\begin{center}
\includegraphics[scale=0.4]{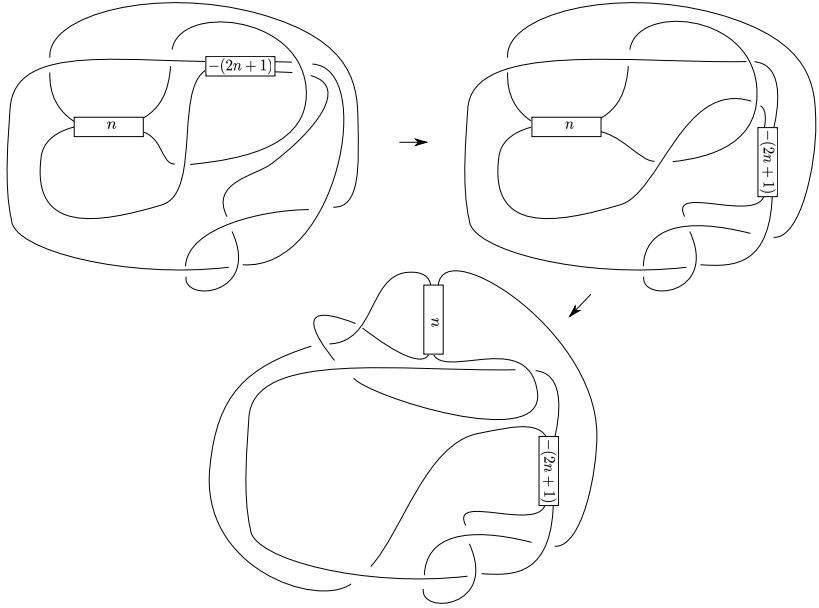}
\caption{The resolution $\ell_{\infty \infty}$ is a Montesinos knot.}\label{fig:ellinfinf}
\end{center}
\end{figure}

Set $r_1=1/2$, $r_2=n/(2n+1)$ and $r_3=5/(10n+7)$.
Suppose that there are coprime integers $m$ and $a$ as above.
Then 
\[
0<2a-m<\frac{m}{2n+1}<\frac{10n+7}{10n+5}.
\]
Hence $2a-m=1$.

Then $a<m(1-r_2)$ implies $2n+1<m$.
Combining with $mr_3<1$ gives 
\[
10n+5<5m<10n+7,
\]
which is impossible.
\end{proof}

By \cite[Proposition 2.1]{OS} and \cite[Proposition 2.1]{OS2}, the double branched cover of $\ell_\infty$ is an L--space, so is that of $\ell$.
\end{proof}

We remark that 
computer experiments suggest that the knot $K_n$ does not admit a nontrivial exceptional surgery.
This situation brings us a difficulty to find candidates of slopes for L--space surgeries.
Also, since $K_n$ has genus $9n+7$,
if $r$--surgery on $K_n$ yields an L--space, then $r\ge 2(9n+7)-1=18n+13$ by \cite{OS3}.
In fact, it is known that any $r\ (\ge 18n+13)$ yields an L--space.
We selected the slope $18n+22$ for our proof, but there might be a better slope for a proof.

\section{Alexander polynomials and formal semigroups}\label{sec:alex}

In this section, we calculate the Alexander polynomial $\Delta_{K_n}(t)$ of $K_n$, and
its formal semigroup.
For the former, we mimic the argument in \cite{BK,BM}.

\begin{theorem}\label{thm:alexander}
The Alexander polynomial of $K_n$ is given as
\[
\Delta_{K_n}(t)=t^{6n+4}+\sum_{i=0}^n(A_1+A_2+A_3+A_4+A_5),
\]
where
\begin{align*}
A_1&=t^{6(n-i)}-t^{6(n-i)+1},\\
A_2&=t^{6(n+i)+6}-t^{6(n+i)+5},\\
A_3&=t^{6(n+i)+8}-t^{6(n+i)+7},\\
A_4&=t^{6(n+i)+10}-t^{6(n+i)+9},\\
A_5&=t^{6(2n+i)+14}-t^{6(2n+i)+13}.
\end{align*}
\end{theorem}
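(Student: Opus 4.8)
The plan is to extract $\Delta_{K_n}(t)$ directly from the braid word and then recognize the outcome as the claimed closed form. Since $K_n=\widehat{\beta}_n$ with $\beta_n=W^{2n+1}U$ for the fixed words $W=\sigma_3\sigma_2\sigma_4\sigma_1\sigma_3\sigma_5\sigma_2\sigma_4\sigma_3$ and $U=\sigma_3\sigma_2\sigma_1\sigma_1\sigma_2\sigma_3\sigma_2\sigma_1\sigma_2\sigma_2$, I would use the reduced Burau representation $\overline{\rho}\colon B_6\to\mathrm{GL}_5(\mathbb{Z}[t^{\pm1}])$ and the classical identity
\[
\Delta_{\widehat{\beta}}(t)\ \doteq\ \frac{\det\!\bigl(I_5-\overline{\rho}(\beta)\bigr)}{1+t+t^2+t^3+t^4+t^5},
\]
valid up to a unit $\pm t^k$ whenever $\widehat{\beta}$ is a knot; equivalently, as $\beta_n$ is positive, one may work with the Seifert matrix $V$ of the fiber surface produced by Seifert's algorithm (read off combinatorially from the $18n+19$ crossings) and $\Delta_{K_n}(t)\doteq\det(V-tV^{T})$. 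Writing $M=\overline{\rho}(W)$ and $N=\overline{\rho}(U)$, everything reduces to evaluating $\det(I_5-M^{2n+1}N)$.

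The structural point is that $n$ enters only through the single power $M^{2n+1}$. I would compute the characteristic polynomial $p_M(x)=\det(xI_5-M)\in\mathbb{Z}[t^{\pm1}][x]$; since $\det M$ is a monomial (Burau matrices of positive braids have monomial determinant), $p_M$ is palindromic up to units, and reducing $x^{2n+1}$ modulo $p_M(x)$ — equivalently, diagonalizing $M$ over an extension and tracking the eigenvalue powers $\mu_j^{2n+1}$ — writes each entry of $M^{2n+1}$ as a fixed $\mathbb{Z}[t^{\pm1}]$-combination of at most five scalar sequences in $n$. Substituting into the $5\times5$ determinant, dividing by $1+\cdots+t^5$, and collecting terms, the coefficients assemble into geometric sums; after the cancellations they collapse to exactly five one-parameter families of surviving exponents, which is precisely the grouping $\sum_{i=0}^{n}(A_1+A_2+A_3+A_4+A_5)$ together with the lone leftover term $t^{6n+4}$. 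Alternatively, one can use the surgery picture of Fig.~\ref{fig:surgery}: compute once the multivariable Alexander polynomial of the link $K\cup C_1\cup C_2$, read the linking numbers $\mathrm{lk}(K,C_i)$ off the diagram, and apply the standard behavior of the Alexander polynomial under $1/q$-twisting; the dependence on the twist parameters is affine, and resumming gives the same polynomial. This is the route used for the analogous families in \cite{BK,BM}.

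Finally I would confirm the candidate against the rigidity already in hand. By Section~\ref{sec:montesinos}, $K_n$ is an L--space knot, so $\Delta_{K_n}$ has the alternating shape \eqref{eq:lspaceknot}; the proposed expression is palindromic of span $18n+14=2g(K_n)$ with unit leading and constant coefficients, begins $1-t+\cdots$ (so $a_1=1$, forced by \cite{HW}), and has $\Delta_{K_n}(1)=1$. A direct computation of $\Delta_{K_n}(t)$ for $n=1,2,3$ from the Burau matrices (or a computer algebra/knot package) then matches the general formula, making the identification unambiguous. The main obstacle is purely computational bookkeeping: handling a $5\times5$ determinant whose entries carry the variable power $M^{2n+1}$, and pushing the cancellations in $\det(I_5-M^{2n+1}N)/(1+\cdots+t^5)$ through cleanly enough that only the five families $A_1,\dots,A_5$ remain. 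Organizing the whole computation around $p_M(x)$, so that $M^{2n+1}$ contributes through just five explicit scalar sequences, is what keeps this tractable.
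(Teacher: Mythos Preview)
Your ``alternative'' route is precisely what the paper does, and it is the one that actually closes.  The paper computes once the multivariable Alexander polynomial of the fixed link $L=K\cup C_1\cup C_2$ (with a software package), records $\mathrm{lk}(K,C_1)=6$ and $\mathrm{lk}(K,C_2)=3$, and uses the change of meridians under the twist surgeries to get
\[
\Delta_{K_n\cup C_1^n\cup C_2^n}(x,y,z)=\Delta_L\bigl(x,\,yx^{6(n+1)},\,zx^{-6(n+1)}\bigr).
\]
Two applications of the Torres condition then yield the remarkably clean closed form
\[
\Delta_{K_n}(t)\ \doteq\ \frac{t^{18n+19}+t^{12n+15}+t^{12n+11}+t^{6n+8}+t^{6n+4}+1}{1+t+t^2+t^3+t^4+t^5},
\]
after which verifying the theorem is a one-page check that $(1+t+\cdots+t^5)\bigl(t^{6n+4}+\sum_{i=0}^n(A_1+\cdots+A_5)\bigr)$ telescopes to the six-term numerator.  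Compared with your primary Burau plan, this bypasses all of the $5\times5$ determinant bookkeeping: the $n$-dependence enters linearly through the exponent substitution rather than through a matrix power, so there is no need to reduce $x^{2n+1}$ modulo $p_M(x)$ or to track five eigenvalue sequences.

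Your Burau outline is a legitimate strategy, but as written it is not yet a proof: the sentence ``after the cancellations they collapse to exactly five one-parameter families'' is exactly the content of the theorem and is asserted, not demonstrated.  More importantly, the final paragraph does not salvage this.  The L--space shape \eqref{eq:lspaceknot}, palindromicity, span $18n+14$, $a_1=1$, and $\Delta_{K_n}(1)=1$ together do \emph{not} pin down $\Delta_{K_n}$ uniquely, and matching $n=1,2,3$ by computer does not make the identification ``unambiguous'' for general $n$.  If you pursue the Burau route you must actually carry out the reduction of $M^{2n+1}$ modulo $p_M$ and the determinant expansion symbolically in $n$; otherwise, switch to the surgery/Torres computation above, which is both shorter and complete.
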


\begin{proof}

Let $L=K\cup C_1\cup C_2$ be the oriented link as shown in Fig.~\ref{fig:surgery2}.
We remark that this is modified from the link in Fig.~\ref{fig:surgery} to reduce the number of crossing
by changing the surgery coefficients.
(This process is not critical, because we only need to calculate the multivariable Alexander polynomial.)

\begin{figure}[tb]
\begin{center}
\includegraphics[scale=0.4]{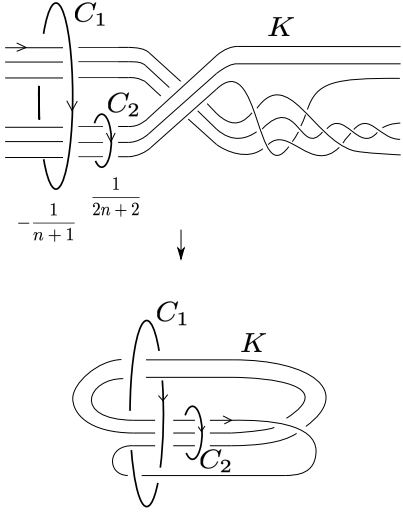}
\caption{A modified surgery diagram of $L=K\cup C_1\cup C_2$.
This is simpler than the previous link in Figure \ref{fig:surgery}.}\label{fig:surgery2}
\end{center}
\end{figure}

 It has the multivariable Alexander polynomial
 \[
 \begin{split}
 \Delta_{L}(x,y,z)&=
 (x^3-1)(x^5y^4z^2+x^3y^5z^2-x^3y^4z^2+x^2y^4z^2
+x^4y^2z+x^3y^3z\\
&\quad -x^3y^2z-x^2y^3z+x^2y^2z+xy^3z+x^3y-x^2y+x^2+y).
 \end{split}
\]
(We used \cite{Ko} for the calculation.)

Performing $-1/(n+1)$--surgery on $C_1$ and $1/(2n+2)$--surgery on $C_2$ changes
the link $K\cup C_1\cup C_2$ to $K_n\cup C_1^n\cup C_2^n$.
Clearly, these links have homeomorphic exteriors.
Hence the induced isomorphism of the homeomorphism on their homology groups relates the Alexander polynomials
of two links.  (See \cite{F, M}.)
Let $\mu_K$, $\mu_{C_1}$ and $\mu_{C_2}$ be the homology classes of meridians of $K$, $C_1$ and $C_2$, respectively.
We assume that each (oriented) meridian has linking number one with the corresponding knot.
Moreover, let $\lambda_K$, $\lambda_{C_1}$ and $\lambda_{C_2}$ be the homology classes of their oriented longitudes.

Similarly, we have homology classes of meridians, $\mu_{K_n}$, $\mu_{C_1^n}$ and $\mu_{C_2^n}$ of $K_n$, $C_1^n$ and $C_2^n$. 
Then we have
\[
\mu_{K_n}=\mu_{K},\quad  \mu_{C_1^n}=\mu_{C_1}-(n+1)\lambda_{C_1},\quad  \mu_{C_2^n}=\mu_{C_2}+(2n+2)\lambda_{C_2}.
\]
Since $\lambda_{C_1}=6\mu_K$ and $\lambda_{C_2}=3\mu_K$,
\[
\mu_{C_1^n}=\mu_{C_1}-6(n+1)\mu_K,\quad \mu_{C_2^n}=\mu_{C_2}+6(n+1)\mu_K.
\]
Thus
\[
\mu_{K_n}=\mu_K, \quad \mu_{C_1}=\mu_{C_1^n}+6(n+1)\mu_K, \quad \mu_{C_2}=\mu_{C_2^n}-6(n+1)\mu_K.
\]
Hence we have the relation between the Alexander polynomials as
\begin{equation}\label{eq:relation-alexander}
\Delta_{K_n\cup C_1^n\cup C_2^n}(x,y,z)=\Delta_{L}(x,yx^{6(n+1)},zx^{-6(n+1)} ).
\end{equation}

On the other hand, 
since $\mathrm{lk}(K_n,C_2^n)=\mathrm{lk}(K,C_2)=3$,
the Torres condition \cite{T} gives
\begin{align*}
\Delta_{K_n\cup C_1^n\cup C_2^n}(x,y,1)&=(x^3y^0-1)\Delta_{K_n\cup C_1^n}(x,y)\\
&=(x^3-1)\Delta_{K_n\cup C_1^n}(x,y).
\end{align*}
Similarly, since $\mathrm{lk}(K_n,C_1^n)=\mathrm{lk}(K,C_1)=6$,
\[
\Delta_{K_n\cup C_1^n}(x,1)=\frac{x^6-1}{x-1}\Delta_{K_n}(x).
\]
Thus,
\[
\Delta_{K_n}(x)=\frac{x-1}{x^6-1}\Delta_{K_n\cup C_1^n}(x,1)=\frac{x-1}{(x^6-1)(x^3-1)}\Delta_{K_n\cup C_1^n\cup C_2^n}(x,1,1).
\]
Then (\ref{eq:relation-alexander}) gives
\begin{align*}
\Delta_{K_n}(t)&=\frac{t-1}{(t^6-1)(t^3-1)}\Delta_L(t,t^{6(n+1)},t^{-6(n+1)})\\
&=\frac{t^2(t-1)}{t^6-1}( t^{18n+19}+t^{12n+15}+t^{12n+11}+t^{6n+8}+t^{6n+4}+1 )\\
&\stackrel{.}{=}\frac{ t^{18n+19}+t^{12n+15}+t^{12n+11}+t^{6n+8}+t^{6n+4}+1}{t^5+t^4+t^3+t^2+t+1}.
\end{align*}

(Recall that $\stackrel{.}{=}$ means equivalence up to units.)

Next,  we calculate 
 \begin{equation}\label{eq:alex}
(t^5+t^4+t^3+t^2+t+1)\bigl(t^{6n+4}+\sum_{i=0}^n(A_1+A_2+A_3+A_4+A_5)\bigr).
\end{equation}
First, 
\[
(t^5+t^4+t^3+t^2+t+1)t^{6n+4}=t^{6n+9}+t^{6n+8}+t^{6n+7}+t^{6n+6}+t^{6n+5}+t^{6n+4}.
\]
Next, 
\begin{align*}
(t^5+t^4+t^3+t^2+t+1)\sum_{i=0}^n A_1&=(t^5+t^4+t^3+t^2+t+1)\sum_{i=0}^n(t^{6(n-i)}-t^{6(n-i)+1})\\
&=(t^5+t^4+t^3+t^2+t+1)\sum_{i=0}^n t^{6(n-i)}(1-t)\\
&=(1-t^6)\sum_{i=0}^n t^{6(n-i)}\\
&=\sum_{i=0}^n t^{6(n-i)}-\sum_{i=0}^n t^{6(n-i)+6}\\
&=1-t^{6n+6}.
\end{align*}

Similarly,
\begin{align*}
(t^5+t^4+t^3+t^2+t+1)\sum_{i=0}^n A_2&=(t^5+t^4+t^3+t^2+t+1)\sum_{i=0}^n(t^{6(n+i)+6}-t^{6(n+i)+5})\\
&=(t^6-1)\sum_{i=0}^n t^{6(n+i)+5}\\
&=\sum_{i=0}^n t^{6(n+i)+11}-\sum_{i=0}^n t^{6(n+i)+5}\\
&=t^{12n+11}-t^{6n+5},
\end{align*}
\begin{align*}
(t^5+t^4+t^3+t^2+t+1)\sum_{i=0}^n A_3&=(t^5+t^4+t^3+t^2+t+1)\sum_{i=0}^n(t^{6(n+i)+8}-t^{6(n+i)+7})\\
&=(t^6-1)\sum_{i=0}^n t^{6(n+i)+7}\\
&=\sum_{i=0}^n t^{6(n+i)+13}-\sum_{i=0}^n t^{6(n+i)+7}\\
&=t^{12n+13}-t^{6n+7},
\end{align*}
\begin{align*}
(t^5+t^4+t^3+t^2+t+1)\sum_{i=0}^n A_4&=(t^5+t^4+t^3+t^2+t+1)\sum_{i=0}^n(t^{6(n+i)+10}-t^{6(n+i)+9})\\
&=(t^6-1)\sum_{i=0}^n t^{6(n+i)+9}\\
&=\sum_{i=0}^n t^{6(n+i)+15}-\sum_{i=0}^n t^{6(n+i)+9}\\
&=t^{12n+15}-t^{6n+9},
\end{align*}
\begin{align*}
(t^5+t^4+t^3+t^2+t+1)\sum_{i=0}^n A_5&=(t^5+t^4+t^3+t^2+t+1)\sum_{i=0}^n(t^{6(2n+i)+14}-t^{6(2n+i)+13})\\
&=(t^6-1)\sum_{i=0}^n t^{6(2n+i)+13}\\
&=\sum_{i=0}^n t^{6(2n+i)+19}-\sum_{i=0}^n t^{6(2n+i)+13}\\
&=t^{18n+19}-t^{12n+13}.
\end{align*}

These show that (\ref{eq:alex}) is equal to 
$t^{18n+19}+t^{12n+15}+t^{12n+11}+t^{6n+8}+t^{6n+4}+1$. 
Hence 
\[
\Delta_{K_n}(t)=t^{6n+4}+\sum_{i=0}^n(A_1+A_2+A_3+A_4+A_5).
\]
\end{proof}

\begin{lemma}\label{lem:rank}
For $n\ge 1$, let $\mathcal{T}=\langle 6,6n+4,6n+8,12n+11,12n+15\rangle$.
The the semigroup $\mathcal{T}$ has rank $5$.
\end{lemma}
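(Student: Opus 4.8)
The plan is to use the standard fact that any submonoid $S$ of $(\mathbb{Z}_{\geq 0},+)$ has a \emph{unique} minimal generating set, consisting of its \emph{atoms}: the elements $s>0$ that cannot be written as $a+b$ with $a,b$ positive elements of $S$. Every generating set must contain each atom (an atom $s$, being $>0$, when expressed through a generating set uses at least two generators counted with multiplicity, all necessarily $<s$, so $s=a+b$ with $a,b\in S\setminus\{0\}$, a contradiction), and conversely the atoms generate $S$, since any non-atom splits as a sum of two strictly smaller positive elements and one recurses, using well-ordering of $\mathbb{Z}_{\geq 0}$. So the rank of $\mathcal{T}$ equals the number of its atoms, and it suffices to show that each of $6,\,6n+4,\,6n+8,\,12n+11,\,12n+15$ is an atom. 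Note that for $n\geq1$ these five integers are pairwise distinct, as $6<6n+4<6n+8<12n+11<12n+15$.

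For each candidate $g$ I would first pin down the set $\mathcal{T}\cap(0,g)$: writing a general element of $\mathcal{T}$ as $6a+(6n+4)b+(6n+8)c+(12n+11)d+(12n+15)e$ with nonnegative integer coefficients, the inequality ``$<g$'' forces the coefficients of the generators that are $\geq g$ to vanish and bounds the others, so the elements below $g$ are built only from the generators smaller than $g$ (with, in the single case $g=12n+15$, the extra isolated element $12n+11$). Any decomposition $g=a+b$ with $a,b>0$ has $a,b<g$, hence both summands lie in this explicit set, and a residue count rules out every such decomposition. Concretely: $6$ is the least positive element of $\mathcal{T}$; a sum of two elements of $\{6,12,\dots,6n\}$ is a multiple of $6$, whereas $6n+4\equiv4\pmod6$; a sum of two elements of $\{6,12,\dots,6n+6\}\cup\{6n+4\}$ is a multiple of $6$, or $\equiv4\pmod6$, or exceeds $6n+8$, whereas $6n+8\equiv2\pmod6$; the sub-semigroup $\langle6,6n+4,6n+8\rangle$ consists of even integers, so it contains neither $12n+11$ nor $12n+15$, nor any sum of two of its elements equal to either; and the only remaining way to write $12n+15$ as $a+b$ would be $12n+11+b$ with $b\in\langle6,6n+4,6n+8\rangle$, forcing the impossible $b=4$ (while $2(12n+11)>12n+15$ for $n\geq1$). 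Hence all five generators are atoms, and $\mathcal{T}$ has rank $5$.

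I do not anticipate a genuine obstacle here; the only point requiring care is the bookkeeping in the middle step — exactly which generators can contribute to an element below a given $g$ — and this is precisely where the hypothesis $n\geq1$ enters, for instance through $6n+8<12n+11$ and $12n+8>6n+8$. The remainder is an elementary string of congruence checks modulo $2$ and modulo $6$.
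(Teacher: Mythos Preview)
Your proposal is correct and follows essentially the same approach as the paper. Both arguments show that each of the five listed generators is irreducible (an ``atom'' in your language; equivalently, must lie in every generating set), using the same residue checks: minimality for $6$, mod-$6$ considerations for $6n+4$ and $6n+8$, and parity for $12n+11$ and $12n+15$, with the final case for $12n+15$ reducing to the impossibility of $4\in\mathcal{T}$.
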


\begin{proof}
Let $G$ be a generating set of $\mathcal{T}$.
It suffices to show that $\{6,6n+4,6n+8,12n+11,12n+15\}\subset G$.

Since $6$ is the minimal nonzero element of $\mathcal{T}$, we need $6\in G$.
Except the multiples of $6$, $6n+4$ is the minimal element of $\mathcal{T}$ and $6n+8$ is the next, so $6n+4, 6n+8\in G$.

Assume $12n+11=6a+(6n+4)b+(6n+8)c$ for $a,b,c\ge 0$.
Then $1\equiv 0\pmod{2}$, so $12n+11\not\in \langle 6,6n+4,6n+8\rangle$.
Hence $12n+11\in G$.

Finally, assume $12n+15=6a+(6n+4)b+(6n+8)c+(12n+11)d$ for  $a,b,c,d\ge 0$.
Then $1\equiv d\pmod{2}$, so $d\ne 0$.  In fact, $d=1$.
We have $4=6a+(6n+4)b+(6n+8)c$.
Since $n\ge 1$, this is impossible.
Hence $12n+15\in G$.
\end{proof}

\begin{theorem}\label{thm:formalsemigroup}
The formal semigroup $\mathcal{S}$ of $K_n$ is a semigroup of rank $5$\textup{:}
\[
\mathcal{S}=\langle 6,6n+4,6n+8,12n+11,12n+15\rangle.
\]
\end{theorem}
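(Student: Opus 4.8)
The plan is to identify the power series $\Delta_{K_n}(t)/(1-t)$ from the product form of the Alexander polynomial obtained inside the proof of Theorem~\ref{thm:alexander}, and then match the resulting set of exponents with the semigroup $\mathcal{T}=\langle 6,6n+4,6n+8,12n+11,12n+15\rangle$ of Lemma~\ref{lem:rank}.

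First I would recall from the proof of Theorem~\ref{thm:alexander} that
\[
(1+t+t^{2}+t^{3}+t^{4}+t^{5})\,\Delta_{K_n}(t)=1+t^{6n+4}+t^{6n+8}+t^{12n+11}+t^{12n+15}+t^{18n+19}.
\]
Dividing by $(1-t)(1+t+t^{2}+t^{3}+t^{4}+t^{5})=1-t^{6}$ gives
\[
\frac{\Delta_{K_n}(t)}{1-t}=\frac{1+t^{6n+4}+t^{6n+8}+t^{12n+11}+t^{12n+15}+t^{18n+19}}{1-t^{6}}
=\sum_{k\ge 0}\sum_{e\in E}t^{\,e+6k},
\]
where $E=\{0,\,6n+4,\,6n+8,\,12n+11,\,12n+15,\,18n+19\}$. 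The crucial point is that the six elements of $E$ are pairwise incongruent modulo $6$: their residues are $0,4,2,5,3,1$ respectively, for every $n$. Hence the arithmetic progressions $e+6\mathbb{Z}_{\ge 0}$ $(e\in E)$ are pairwise disjoint, so all coefficients of the series lie in $\{0,1\}$ and
\[
\mathcal{S}=\bigsqcup_{e\in E}\bigl(e+6\mathbb{Z}_{\ge 0}\bigr).
\]

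It then remains to show $\mathcal{S}=\mathcal{T}$. For $\mathcal{S}\subseteq\mathcal{T}$: the progression $0+6\mathbb{Z}_{\ge 0}$ equals $\langle 6\rangle\subseteq\mathcal{T}$; each of $6n+4,\,6n+8,\,12n+11,\,12n+15$ is a generator of $\mathcal{T}$, so $g+6\mathbb{Z}_{\ge 0}=g+\langle 6\rangle\subseteq\mathcal{T}$; and $18n+19=(6n+8)+(12n+11)\in\mathcal{T}$, whence $18n+19+6\mathbb{Z}_{\ge 0}\subseteq\mathcal{T}$. For the reverse inclusion it suffices to check that $\mathcal{S}$ is closed under addition, since then $\mathcal{S}$ contains the semigroup generated by $6,6n+4,6n+8,12n+11,12n+15$, each of which lies in $\mathcal{S}$. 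If $x\in e+6\mathbb{Z}_{\ge 0}$ and $y\in e'+6\mathbb{Z}_{\ge 0}$ with $e,e'\in E$, then $x+y\equiv e+e'\pmod 6$ and $x+y\ge e+e'$, so one only needs $e+e'\ge e''$ for the unique $e''\in E$ with $e''\equiv e+e'\pmod 6$; this is a finite check whose only equalities are $(6n+8)+(12n+11)=(6n+4)+(12n+15)=18n+19$, and whose inequalities all hold thanks to $n\ge 1$. Thus $\mathcal{S}=\mathcal{T}$, and by Lemma~\ref{lem:rank} this semigroup has rank $5$, which proves the theorem.

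The calculations involved are elementary; the step that needs the most attention is the modular bookkeeping, namely verifying that the exponents $0,6n+4,6n+8,12n+11,12n+15,18n+19$ occupy all six residue classes modulo $6$ and that $18n+19$ is really the least element of $E$ in residue class $1$ for all $n\ge 1$. Equivalently, one can organise the argument around Apéry-type representatives: since $6\in\mathcal{T}$, the semigroup $\mathcal{T}$ splits as a disjoint union of six progressions modulo $6$, and a brief inspection of sums of at most two generators shows that their minimal elements are exactly the six members of $E$, again giving $\mathcal{T}=\mathcal{S}$.
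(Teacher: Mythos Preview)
Your argument is correct, and it is cleaner than the route the paper takes. Both proofs hinge on the same observation that the six exponents $0,6n+4,6n+8,12n+11,12n+15,18n+19$ represent all residues modulo~$6$, but the paper does not use the factored form $\Delta_{K_n}(t)/(1-t)=P(t)/(1-t^{6})$; instead it expands $\Delta_{K_n}(t)/(1-t)$ from the summation formula of Theorem~\ref{thm:alexander} to write $\mathcal{S}=\mathbb{Z}_{\ge 6n+4}\cup B\setminus(C\cup D)$ for explicit finite sets $B,C,D$, and then compares $\mathcal{S}$ and $\mathcal{T}$ class by class in $\mathbb{Z}/6\mathbb{Z}$, separately below and above the Frobenius-type threshold $18n+14$. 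Your Ap\'ery-set description $\mathcal{S}=\bigsqcup_{e\in E}(e+6\mathbb{Z}_{\ge 0})$ bypasses that bookkeeping entirely, and reducing $\mathcal{T}\subseteq\mathcal{S}$ to closure of $\mathcal{S}$ under addition turns the whole verification into a single finite table of sums $e+e'$, which is shorter and more transparent. The only place to be a touch more explicit, if you wished, is to actually display the $15$ nontrivial pairs $e+e'$ (you named the two equalities but left the strict inequalities implicit); all of them hold already for $n\ge 0$, so your ``thanks to $n\ge 1$'' is in fact not needed here.
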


\begin{proof}
By Theorem \ref{thm:alexander},
\[
\Delta_{K_n}(t)=t^{6n+4}+\sum_{i=0}^n(A_1+A_2+A_3+A_4+A_5).
\]
Hence, as a formal power series,
\begin{align*}
\frac{\Delta_{K_n}(t)}{1-t}&=\frac{t^{6n+4}}{1-t}+\sum_{i=0}^n\Bigl(  
\frac{A_1}{1-t}+\frac{A_2}{1-t}+\frac{A_3}{1-t}+\frac{A_4}{1-t}+\frac{A_5}{1-t}
\Bigr)\\
&=t^{6n+4}\sum_{j=0}^\infty t^j+\sum_{i=0}^n(t^{6(n-i)}-t^{6(n+i)+5}-t^{6(n+i)+7}-t^{6(n+i)+9}-t^{6(2n+i)+13}  )\\
&=\sum_{j=0}^\infty t^{6n+4+j}+\sum_{i=0}^n t^{6(n-i)} - t^{6n+5}\sum_{i=0}^{n}(t^{6i}+t^{6i+2}+t^{6i+4})-\sum_{i=0}^nt^{12n+13+6i}.
\end{align*}
Then
\[
\mathcal{S}=\mathbb{Z}_{\ge 6n+4}\cup B-C-D,
\]
where $B=\{0,6,12,\dots,6n\}$, $C=\{6n+5,6n+7,6n+9,\dots,12n+5,12n+7,12n+9\}$ and
$D=\{12n+13,12n+19,\dots,18n+13\}$.

Let $\mathcal{T}=\langle 6,6n+4,6n+8,12n+11,12n+15\rangle$.
We need to show that $\mathcal{S}=\mathcal{T}$.

First, if $m\ge 18n+14$, then $m\in \mathcal{S}$.   Thus $\mathbb{Z}_{\ge 18n+14}\subset \mathcal{S}$.
To show that $\mathbb{Z}_{\ge 18n+14}\subset \mathcal{T}$, it suffices to verify that
$18n+14,18n+15,\dots,18n+19 \in \mathcal{T}$, since $6\in \mathcal{T}$.
This follows from
\[
18n+14\equiv 6n+8,18n+15\equiv 12n+15,  18n+16\equiv 6n+4,
\]
\[
18n+17\equiv 12n+11, 18n+18\equiv 6 \pmod{6},\]
\[
18n+19=(6n+8)+(12n+11). 
\]

Next, the set $\mathbb{Z}_{<18n+14}$ of nonnegative integers less than $18n+14$ is
decomposed into the congruence classes of modulo 6, $Z_0,Z_1,Z_2,\dots,Z_5$, where
$Z_i=\{m\mid 0\le m<18n+14, m\equiv i \pmod{6}\}$.
Then $B\subset Z_0$, $D\subset Z_1$, and $C\subset Z_1\cup Z_3\cup Z_5$.

We examine each congruence class.
\begin{itemize}
\item
$Z_0\subset \mathcal{S}$ and $Z_0\subset \mathcal{T}$.  Thus $Z_0\cap \mathcal{S}=Z_0\cap \mathcal{T}=Z_0$.
\item
$Z_1\cap \mathcal{S}=\varnothing$.
\item
$Z_2\cap \mathcal{S}=\{6n+8,6n+14,\dots, 18n+8\}\subset \mathcal{T}$.
\item
$Z_3\cap\mathcal{S}=\{ 12n+15,12n+21,\dots, 18n+9 \}\subset \mathcal{T}$.
\item
$Z_4\cap \mathcal{S}=\{6n+4,6n+10,\dots,18n+10\}\subset \mathcal{T}$.
\item
$Z_5\cap \mathcal{S}=\{12n+11,12n+17,\dots, 18n+11  \}\subset \mathcal{T}$.
\end{itemize}
Hence $\mathcal{S}\cap \mathbb{Z}_{<18n+14}\subset \mathcal{T}$.

Conversely, let $m\in Z_1$.  
If $m\in \mathcal{T}$, then 
we need to use $12n+11$ or $12n+15$ to yield $m$.
Since $m\le 18n+13$,  either
\begin{itemize}
\item[(1)] $m=(12n+11)+r$ and $r\le 6n+2$, $r\equiv 2\pmod{6}$, or
\item[(2)]  $m=(12n+15)+r$ and $r\le 6n-2$, $r\equiv 4 \pmod{6}$.
\end{itemize}
However, there is no $r\in \mathcal{T}$ satisfying these.  Hence $Z_1\cap \mathcal{T}=\varnothing$.

Let $m\in Z_2\cap \mathcal{T}$.
If $m<6n+8$, then $m\le 6n+2$. So, there is no such element in $\mathcal{T}$.
Hence $Z_2\cap \mathcal{S}=Z_2\cap \mathcal{T}$.

Similarly, let $m\in Z_4\cap \mathcal{T}$.
If $m<6n+4$, then $m\le 6n-2$.
But there is no such element in $\mathcal{T}$.
Hence $Z_4\cap \mathcal{S}=Z_4\cap \mathcal{T}$.

Let $m\in Z_3\cap \mathcal{T}$.
Again, we need to use $12n+11$ or $12n+15$ to yield $m$.
Then, we have either
\begin{itemize}
\item[(3)]
$m=(12n+11)+r$ and $r\le 6n-2$ and $r\equiv 4 \pmod{6}$, or
\item[(4)]
$m=(12n+15)+r$ and $r\le 6n-6$ and $r\equiv 0\pmod{6}$.
\end{itemize}
For (3), there is no $r\in \mathcal{T}$.
For (4),  $r\in \{0,6,12,\dots, 6n-6\}$, so $m\in Z_3\cap \mathcal{S}$.
Hence $Z_3\cap \mathcal{S}=Z_3\cap \mathcal{T}$.

Finally, let $m\in Z_5\cap \mathcal{T}$.
Again, we have either
\begin{itemize}
\item[(5)]
$m=(12n+11)+r$ and $r\le 6n$ and $r\equiv 0 \pmod{6}$, or
\item[(6)]
$m=(12n+15)+r$ and $r\le 6n-4$ and $r\equiv 2\pmod{6}$.
\end{itemize}
For (6), there is no $r\in \mathcal{T}$.
For (5), $r\in \{0,6,12,\dots,6n\}$, so $m\in Z_5\cap \mathcal{S}$.
Hence $Z_5\cap \mathcal{S}=Z_5\cap \mathcal{T}$.
\end{proof}

\section{Hyperbolicity}\label{sec:hyp}

In this section, we prove that our knot $K_n$ is a hyperbolic knot for any $n\ge 1$
by using the fact that $K_n$ has tunnel number one.

\begin{lemma}\label{lem:tunnel}
For $n\ge 1$, $K_n$ has tunnel number one, hence $K_n$  is prime.
\end{lemma}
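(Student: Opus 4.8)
The plan is to exhibit an explicit unknotting tunnel for $K_n$, i.e. an arc $\tau$ properly embedded in the knot exterior $E(K_n)$ so that the complement of an open regular neighborhood of $\tau$ in $E(K_n)$ is a genus-two handlebody. Equivalently, since $K_n$ is the closure of the positive braid $\beta_n$, I would try to realize $K_n$ together with its tunnel as arising from the surgery description $K\cup C_1\cup C_2$ of Section \ref{sec:montesinos}: the point is that $K$ in Fig.~\ref{fig:surgery} (or its modified version in Fig.~\ref{fig:surgery2}) is a very simple curve, and the unknotting tunnel should be visible there and survive the Dehn fillings on $C_1$ and $C_2$. Concretely, I expect $K$ to be unknotted (or a simple torus knot) in $S^3$ with $C_1\cup C_2$ lying in a controlled position, so that a band/arc connecting the relevant strands gives a genus-two Heegaard surface for the filled manifold, which is $E(K_n)$.

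Here is how I would organize the steps. First, isotope the link $K\cup C_1\cup C_2$ into the position of Fig.~\ref{fig:mont1} or a comparable picture where $K$ bounds an obvious disk and $C_1,C_2$ are small circles; from there one reads off an arc $\tau$ (typically the cocore of a band in the plumbing/braid picture) whose exterior in $E(K)$ is a genus-two handlebody, and one checks that $\tau$ is disjoint from $C_1$ and $C_2$. Second, observe that performing $-1/n$ (resp. $1/2n$) surgery on $C_1$ (resp. $C_2$) — which by the discussion in Section \ref{sec:montesinos} converts $K$ to $K_n$ — is a Dehn filling of the exterior of $K\cup\tau\cup C_1\cup C_2$ that does not touch $\tau$; hence the genus-two splitting of $E(K)\setminus N(\tau)$ persists, showing $\tau$ is an unknotting tunnel for $K_n$ and so $t(K_n)=1$. (Alternatively, and perhaps more cleanly, one can argue directly from the braid word $\beta_n$: a positive braid closure whose word differs from a torus-braid power by a bounded-length suffix is often a "twisted torus knot"-type curve, and such knots are classically tunnel number one; I would cite or reprove this in our specific case.) Third, invoke the standard fact (Scharlemann, or Norwood) that a tunnel number one knot is prime, which gives the ``hence $K_n$ is prime'' clause immediately.

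The main obstacle I anticipate is the first step: rigorously producing the tunnel and verifying that the complement of its neighborhood in $E(K_n)$ is genuinely a handlebody, rather than just plausibly so from a picture. This is the kind of claim that is ``clear from the figure'' to an expert but needs a careful handle-slide/isotopy argument, especially because the braid $\beta_n$ has the torus-like block $(\sigma_3\sigma_2\sigma_4\sigma_1\sigma_3\sigma_5\sigma_2\sigma_4\sigma_3)^{2n+1}$ raised to a power depending on $n$, so the argument must be uniform in $n$. I would handle this by choosing the tunnel to lie in the part of the diagram \emph{outside} the repeated block — i.e. associated to the fixed suffix $\sigma_3\sigma_2\sigma_1\sigma_1\sigma_2\sigma_3\sigma_2\sigma_1\sigma_2\sigma_2$ — so that the $n$-dependence is confined to a twist region that the genus-two handlebody structure is manifestly insensitive to. Once the tunnel is fixed independently of $n$, the verification reduces to a single model computation plus the remark that Dehn twisting along an annulus transverse to the Heegaard surface preserves the handlebody decomposition. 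With hyperbolicity of $K_n$ to be established afterwards in this section, the primeness here is really a stepping stone: it rules out connected-sum decompositions, after which one only needs to exclude torus and satellite knots to conclude $K_n$ is hyperbolic.
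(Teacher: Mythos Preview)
Your proposal is correct and essentially the same as the paper's: the paper exhibits an explicit unknotting tunnel $\gamma$ in a figure and verifies, via a drawn sequence of isotopies, that the outside of a regular neighborhood of $K_n\cup\gamma$ is a genus-two handlebody. The paper works directly with a diagram of $K_n$ rather than routing through the surgery description $K\cup C_1\cup C_2$, but your fallback plan---placing the tunnel in the fixed suffix of the braid so that the $(2n{+}1)$-power block becomes an irrelevant twist region---is exactly how the paper's figure achieves uniformity in $n$.
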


\begin{proof}
Figure \ref{fig:tunnel} shows an unknotting tunnel $\gamma$ for $K_n$.
The series of isotopy as illustrated in Fig.~\ref{fig:tunnel} indicates that
the outside of a regular neighborhood of $K_n\cup \gamma$ is a genus two handlebody.
\end{proof}

\begin{figure}[tb]
\begin{center}
\includegraphics[scale=0.48]{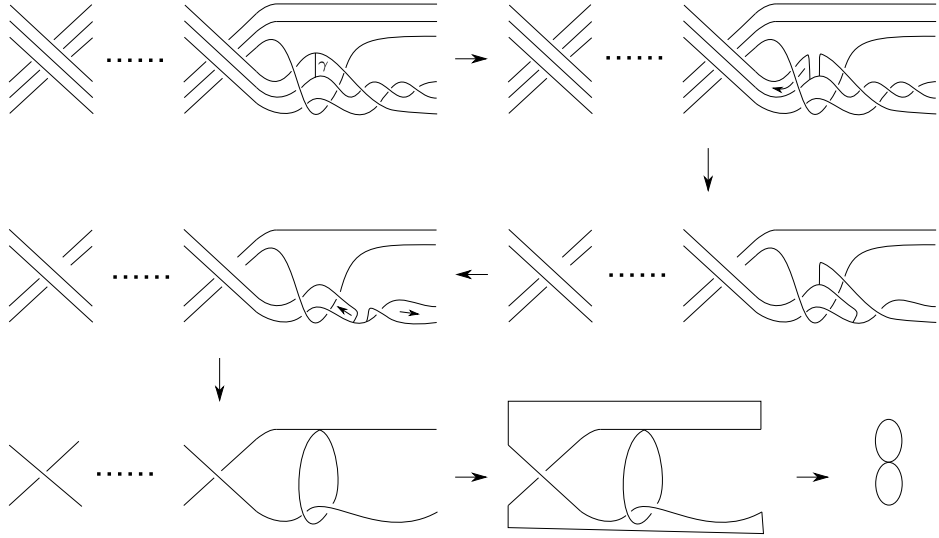}
\caption{The unknotting tunnel $\gamma$ and the series of isotopy of $N(K_n\cup \gamma)$.}\label{fig:tunnel}
\end{center}
\end{figure}

\begin{theorem}\label{thm:hyp}
For $n\ge 1$, $K_n$ is a hyperbolic knot.
\end{theorem}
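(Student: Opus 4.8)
The plan is to prove hyperbolicity by ruling out the two remaining possibilities for a prime knot: being a torus knot or a satellite knot. By Lemma~\ref{lem:tunnel}, $K_n$ is prime, so by Thurston's geometrization of knot complements it suffices to show that $K_n$ is neither a torus knot nor a satellite knot. Since $K_n$ is an L--space knot (Theorem~\ref{thm:lspace}) with a known Alexander polynomial (Theorem~\ref{thm:alexander}), the torus knot case is easy: a torus knot $T(p,q)$ has formal semigroup $\langle p,q\rangle$, which has rank $2$, whereas Theorem~\ref{thm:formalsemigroup} (via Lemma~\ref{lem:rank}) shows the formal semigroup of $K_n$ has rank $5$. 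Alternatively one can compare genus $9n+7$ and the number of nonzero terms of $\Delta_{K_n}$ against the rigid form of a torus knot's Alexander polynomial. So the real content is excluding satellite knots.

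For the satellite case I would use the tunnel number one hypothesis crucially. By Lemma~\ref{lem:tunnel}, $K_n$ has tunnel number one. Morimoto--Sakuma--Yokota and related work (building on Scharlemann and others) severely restrict tunnel number one satellite knots: a tunnel number one satellite knot must be a $(1,1)$--knot of a specific cabling type, and in fact Morimoto--Sakuma showed any tunnel number one satellite knot is obtained by a prescribed construction on a torus knot or a $2$--bridge knot. More directly, one can invoke that a tunnel number one satellite knot has a companion torus whose interior pattern is a $1$--bridge braid, and such a knot is an iterated torus knot only in the cable case. Combined with the fact that $K_n$ is an L--space knot: Hom (and Hedden, Hom--Lidman--Watson) results say an L--space satellite knot must be a cable $C_{p,q}(K')$ of an L--space knot $K'$ with $q \geq 2g(K')p$ or similar — but then its formal semigroup is computed by Wang's cabling formula, and I would check that $\langle 6, 6n+4, 6n+8, 12n+11, 12n+15\rangle$ does not arise from any such cabling formula (e.g.\ by examining the generators modulo the cabling parameter, or by noting the pattern of the Alexander polynomial). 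The cleanest route: a tunnel number one L--space satellite is a cable of a torus knot, hence an iterated torus knot, hence its knot Floer complex / Alexander polynomial has the multiplicative structure forced by the cabling formula, which one can contradict from the explicit $\Delta_{K_n}$.

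The steps, in order, would be: (1) invoke geometrization plus Lemma~\ref{lem:tunnel} to reduce to excluding torus and satellite knots; (2) exclude torus knots using rank $5 > 2$ of the formal semigroup (Theorem~\ref{thm:formalsemigroup}); (3) exclude satellite knots by combining tunnel number one with being an L--space knot to conclude $K_n$ would have to be a cable of a torus knot (hence an iterated torus knot), then derive a contradiction either from Wang's cabling formula for formal semigroups or directly from the shape of $\Delta_{K_n}(t)$ in Theorem~\ref{thm:alexander} — for instance, an iterated torus knot's Alexander polynomial factors through the cabling substitution $\Delta_{C_{p,q}(K')}(t) = \Delta_{K'}(t^p)\cdot \Delta_{T(p,q)}(t)$, and one checks $t^{6n+4} + \sum_{i=0}^n(A_1+\dots+A_5)$ admits no such factorization for $n \geq 1$.

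I expect the main obstacle to be step (3), specifically pinning down exactly which classification theorem for tunnel number one satellite knots to cite and verifying its hypotheses apply here, and then executing the Alexander polynomial non-factorization (or cabling-formula) argument cleanly for all $n \geq 1$ rather than case by case. A convenient shortcut, if available, is that $K_0$ being the $(2,11)$--cable of the trefoil is flagged precisely as the exceptional non-hyperbolic member, suggesting the argument should show that for $n \geq 1$ the knot $K_n$ fails the arithmetic constraints (on genus, winding number, or the cabling slope) that a cable would have to satisfy; I would make that the heart of the proof.
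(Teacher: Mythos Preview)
Your overall architecture matches the paper's: primeness via Lemma~\ref{lem:tunnel}, torus knots excluded by rank $5$ versus rank $2$, and then the satellite case handled using tunnel number one plus the L--space hypothesis. The torus knot step is fine and is exactly what the paper does.

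The gap is in step (3). Your ``cleanest route'' asserts that a tunnel number one L--space satellite must be a \emph{cable} of a torus knot, hence an iterated torus knot, and then you plan to contradict Wang's cabling formula. But that assertion is not justified and is not what the available theorems give you. Morimoto--Sakuma tells you the companion is a torus knot; Hanselman--Rasmussen--Watson tells you the pattern is an L--space knot; Baker--Motegi tells you the pattern is \emph{braided} (winding number equals wrapping number). None of this forces the pattern to be a cable. Since $K_n$ has bridge number at most $6$, Schubert's inequality leaves open the possibility of a $2$--bridge torus companion $T(2,q)$ with a braided pattern of winding number~$3$. That case is genuinely not a cable and cannot be dismissed by the cabling formula or by an Alexander polynomial factorization of the form $\Delta_{K'}(t^p)\Delta_{T(p,q)}(t)$ with $p=2$.

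The paper does have to work for this case: it invokes Lee--Vafaee's classification of $3$--braid L--space knots (they are torus knots or twisted torus knots $T(3,t;2,s)$), then uses the explicit Morimoto--Sakuma construction together with Lee's list of unknotted twisted torus knots to produce a finite table of possible patterns, and finally eliminates each entry by genus and determinant computations against $g(K_n)=9n+7$ and $\Delta_{K_n}(-1)=10n+11$. Your proposal is missing this entire branch; without it, the satellite case is not closed.
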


\begin{proof}
By Theorem \ref{thm:formalsemigroup}, the formal semigroup of $K_n$ is a semigroup of rank $5$.
Since the formal semigroup of a torus knot is a semigroup of rank two (see \cite{BL}), $K_n$ is not a torus knot.

Assume that $K_n$ is a satellite knot for a contradiction.
By Lemma \ref{lem:tunnel}, $K_n$ has tunnel number one.
Then Morimoto and Sakuma's classification \cite{MS} tells us that $K_n$ has a torus knot $T(p,q)$ as its companion.
Since $K_n$ has bridge number at most $6$, the companion has bridge number at most three \cite{Sc}.
More precisely, either the companion is $3$--bridge and the wrapping number of the pattern is two, or
the companion is $2$--bridge and the wrapping number is two or three.

By Theorem \ref{thm:lspace}, $K_n$ is an L--space knot.
Then \cite{HRW} implies that the pattern knot is also an L--space knot.
Furthermore, \cite[Theorem 1.17]{BM} claims that the pattern is braided in the pattern solid torus.
In particular, the wrapping number coincides with the winding number there.

We divide the argument into two cases.

\medskip
\textbf{Case 1.}
Suppose that $K_n$ has a companion $T(3,q)\ (|q|>3)$ and a braided pattern knot $P$.
Then the wrapping number and winding number of $P$ are equal to two.
This means that $K_n$ is a $2$--cable of $T(3,q)$.
However, the cabling formula of \cite{W} shows that its formal semigroup has rank three.
Hence this case is impossible.

\medskip 
\textbf{Case 2.}
Suppose that $K_n$ has a companion $T=T(2,q)\ (|q|\ge 3)$ and a braided pattern knot $P$.
We may assume that $q>0$ by taking the mirror image of $K_n$, if necessary.
If the wrapping number is two, then $K_n$ is a $2$--cable, which is a contradiction again.
Hence the wrapping number and the winding winding number are equal to three.
For the Alexander polynomials, we have
\[
\Delta_{K_n}(t)=\Delta_T(t^3)\Delta_P(t)
\]
(see \cite{BZ}).
Here, $\Delta_T(t^3)=1-t^3+t^6-\dots+t^{3(q-1)}$.
Also, this implies $g(K_n)=3g(T)+g(P)$  (see \cite[Lemma 2.6]{HLV}).


By \cite[Theorem 3.1]{LV}, the only closed $3$--braids which are L--space knots are
torus knots and twisted torus knots $T(3,t; 2,s)$ with $ts>0$.
Here, $T(3,t; 2,s)$ is obtained from $T(3,t)$ by adding $s$ full twists on two adjacent strings.
Again, a $3$--cable is excluded.

We need to recall the construction of \cite{MS} of tunnel number one satellite knots.
Let $k_1\cup k_2$ be a $2$--bridge link in $S^3$.
We remark that each component $k_i$ is unknotted.
The exterior of $k_2$ is a solid torus $J$ containing $k_1$ in its interior.
Here, the longitude of $J$ is the meridian of $k_2$.
For the companion $T$, consider the homeomorphism $f$ from $J$ to the regular neighborhood $N(T)$ of $T$, which sends the longitude of $J$ to
the $(1,2q)$--curve on $\partial N(T)$.
This $(1,2q)$--curve corresponds to a regular fiber of the Seifert fibration in the exterior of $T$.
Then the image $f(k_1)$ gives our $K_n$.

Since the pattern knot $P$ is defined so as to preserve the preferred longitudes of $J$ and $N(T)$,
$P$ is obtained from $k_1$ in $J$ by adding $2q$--full twists.
Conversely, if we add $(-2q)$-full twists on $P$, then the result is unknotted.

By the classification of twisted torus knots which are unknotted in \cite{L}, 
$T(3,2; 2,-1)$, $T(3,2; 2, -2)$, $T(3,1;2,-1)$ and their mirror images
$T(3,-2;2;1)$, $T(3,-2;2,2)$, $T(3,-1;2,1)$
 give all $3$--strand twisted torus knots, which are unknotted.
Thus Table 1 is the list of possible pattern knot $P$ with genus.
(Each knot has a positive braid presentation, so its genus is calculated as in Section 2.)
Since $P$ is an L--space knot and not a $3$--cable, (1), (2) and (3) are excluded by \cite{LV}.
(In fact, (1) gives a $3$--cable.)

\begin{table}[tbp]
\begin{tabular}{c|c|c}
 & Knot & Genus\\
 \hline
(1) & $T(3,6q+2;2,-1)$ & $6q$  \\
(2) & $T(3,6q+2;2,-2)$ & $6q-1$ \\
(3) & $T(3,6q+1;2,-1)$ & $6q-1$  \\
(4) & $T(3,6q-2;2,1)$ &  $6q-2$\\
(5) &  $T(3,6q-2;2,2)$ & $6q-1$ \\
(6) & $T(3,6q-1;2,1)$ & $6q-1$ \\
\end{tabular}
\caption{List of the pattern knot $P$ and its genus.}\label{table}
\end{table}

Recall that  $g(K_n)=9n+7$ and  $g(T)=(q-1)/2$.
If $g(P)=6q-1$, then $9n+7=3(q-1)/2+6q-1$, so $18n+14=3(q-1)+12q-2$.
Then $18n+14 \equiv -2 \pmod{3}$, a contradiction.
Thus (4) remains.
For this case, $9n+7=3(q-1)/2+6q-2$ gives $6n+7=5q$.
Then $n\equiv 3 \pmod{5}$.
Set $n=5m+3 \ (m\ge 0)$.  Then $q=6m+5$.

We have $\Delta_{K_n}(-1)=\Delta_T(-1)\Delta_P(-1)$, and
$\Delta_{K_n}(-1)=10n+11=50m+41$ from Theorem \ref{thm:alexander}.
However, $\Delta_T(-1)=q=6m+5$.
Then $6m+5$ does not divide $50m+41$, a contradiction.

Thus we have shown that our $K_n$ is hyperbolic.
\end{proof}

\begin{proof}[Proof of Theorem \textup{\ref{thm:main1}}] 
By Theorems \ref{thm:lspace} and \ref{thm:hyp},
$K_n$ is a hyperbolic L--space knot.
Its formal semigroup is described in  Theorem \ref{thm:formalsemigroup}.
\end{proof}


\section*{Acknowledgments}
The author would like to thank Ken Baker, Marc Kegel and Kimihiko Motegi for valuable communication, and
Yukinori Kitadai for his help of computer calculation.
The author also thanks the referee for valuable suggestions and comments.


\end{document}